%% file: MAIN.tex
\documentclass[12pt]{article}
\usepackage{amsmath}
\usepackage{amssymb}
\usepackage[english]{babel}
\usepackage{amsthm}
\usepackage[dvipdfmx]{graphicx}
\usepackage{tikz}
\usepackage{enumerate}
\usepackage{authblk}
\usepackage{hyperref}
\usepackage[textsize=footnotesize]{todonotes}
\newcommand \tp {{\mathrm {tp}}}
\newcommand \stp{{\mathrm {stp}}}
\newcommand \acl {{\mathrm {acl}}}

\newcommand \ind {\mathop{\smile \hskip -0.9em ^| \ }}

\newcommand \Th {{\mathrm {Th}}}

\newcommand \Z  {{\mathbb Z}}

\newcommand \Q {{\mathbb Q}}

\newcommand \M {{\cal M}}

\newtheorem{theorem}{Theorem}
\newtheorem{proposition}[theorem]{Proposition}
\newtheorem{lemma}[theorem]{Lemma}
\newtheorem{corollary}[theorem]{Corollary}
\newtheorem{fact}[theorem]{Fact}
\newtheorem{claim}{Claim}[theorem]
\newtheorem*{claim*}{Claim}
\theoremstyle{definition}
\newtheorem{definition}[theorem]{Definition}
\newtheorem{example}[theorem]{Example}

\theoremstyle{definition}
\newtheorem{remark}[theorem]{Remark}

\newtheorem{thmA}{Theorem}

\newcommand{\pair}[1]{\langle #1 \rangle}
\title{On chromatic number of countable graphs}

\author[1]{Hirotaka Kikyo}
\author[2]{Koitaro Nakaura}
\author[3]{Akito Tsuboi \thanks{This work was partially supported by JSPS KAKENHI Grant Numbers 21K03336 and 25K07096.}}
\affil[1]{Kobe University}
\affil[2]{University of Tokyo}
\affil[3]{University of Tsukuba}

\begin{document}

\maketitle
\begin{abstract}
    This paper investigates when countable graphs have a finite or an infinite chromatic number through model‑theoretic methods. For Fra\"{i}ss\'{e} limits, we show that instability forces the chromatic number to be infinite, yielding a complete classification of homogeneous graphs with a finite chromatic number. In contrast, Hrushovski construction always produces graphs of finite chromatic number, though the value can be made arbitrarily large. In tame settings—such as stable graphs of $U$‑rank one and graphs definable in o‑minimal structures—an infinite chromatic number necessarily yields arbitrarily large cliques. These results provide a unified framework connecting structural model‑theoretic properties with chromatic behavior.
\end{abstract}
\section{Introduction}

\input{introduction}

\section{Preliminaries and Basic Results}
We view a graph as a model-theoretic structure $G = (G, R)$, where $R \subset G^2$ is a symmetric and irreflexive relation representing the edge set. 
We refer to induced subgraphs simply as subgraphs, so that subgraphs coincide with substructures.
When $R=\{(a, b) \in G^2: a \neq b\}$, the graph $G$ is called complete.  
 A complete subgraph $H \subset G$ is called a clique of $G$.
 The degree of a vertex $v$ is the number of vertices adjacent to $v$.
The symbol \( G \) is used exclusively to denote a graph, typically assumed to be countably infinite.  
Its monster model, assumed to be sufficiently saturated, is denoted by \(\M \).
For two sets \( A \) and \( B \), the union \( A \cup B \) is sometimes written simply as \( AB \),  
provided no confusion arises.

\begin{definition}
Let $G$ be a graph. 
The chromatic number $\chi(G)$ of $G$ is the smallest cardinal $\kappa$ such that there exists a function (called a coloring) $f:G \to \kappa$ such that all adjacent vertices have a different color.    
\end{definition}

\begin{remark}\label{rem-first}
    \begin{enumerate}
    \item If $G$ has a clique of size $\kappa$, then $\chi(G) \geq \kappa$.
However, the converse does not hold in general, as illustrated in Proposition \ref{Kikyo}. 
        \item Let $G$ be an infinite graph with $\chi(G) = n <\omega$. Then
 \begin{itemize}
     \item[(i)] there is a finite subgraph \(F \subset G\) with \(\chi(F)=n\) (De Bruijn-Erd\H{o}s Theorem \cite{DBE}); and  
     \item[(ii)] for every \(H \equiv G\), we have \(\chi(H)=n\).
 \end{itemize}

 \begin{proof}
(i) Towards a contradiction, suppose that no finite subgraph of \(G\) has chromatic number \(n\).  
Let \(T\) be the theory of \(R\)-graphs, and let \(\mathrm{Diag}(G)\) be the diagram of \(G\).  
Expand the language by a new binary relation symbol \(E\), and let \(T^{*}\) be the theory $T \cup \mathrm{Diag}(G)$ plus the following sentence: 
\begin{itemize}
    \item $E(x,y)$ is an equivalence relation with \(n-1\) classes, each of which is \(R\)-free.
\end{itemize}
By our assumption, \(T^{*}\) is consistent: Let \(G^{*} \models T^{*}\).  
Clearly \(\chi(G^{*}) < n\).  
Since \(G^{*} \models \mathrm{Diag}(G)\), we may view \(G\) as an induced subgraph of \(G^{*}\).  
But then \(\chi(G) \le \chi(G^{*}) < n\), contradicting \(\chi(G)=n\).  
This proves (i).
\medbreak
(ii) follows directly from (i), since the property “there exists a finite subgraph of chromatic number \(n\)” is first‑order expressible. 
Alternatively, one may argue directly as follows.  
Let \(T^{*}\) be the theory \(\Th(G)\) together with the sentence asserting that \(E(x,y)\) is an equivalence relation with \(n\) classes, each \(R\)-free.  
This theory is consistent by the same finite‑satisfiability argument as above.  
Let \(G^{*} \models T^{*}\) be saturated.  
Any \(H \equiv G\) elementarily embeds into \(G^{*}\), and since \(\chi(G^{*}) \le n\), we obtain \(\chi(H) \le n\).  
By symmetry with the previous paragraph, \(\chi(H)=n\).
\end{proof}
       \item Let \( G \) be an infinite graph with \( \chi(G) = \omega \), and suppose \( G = H_0 \cup \dots \cup H_{n-1} \) is a finite partition.  
    Then there exists \( i < n \) such that \( \chi(H_i) = \omega \).
\item \label{edge-set-partition}
    Let \( (G, R) \) be a graph, and suppose \( R = C_0 \cup \dots \cup C_{n-1} \) is a finite partition of the edge set,  
where each \( C_i \subset R \) is symmetric.  
If the chromatic number \( \chi(G, R) \) is infinite, then there exists some \( i < n \) such that \( \chi(G, C_i) \) is infinite.
\end{enumerate}
\end{remark}
It is evident that a triangle-free graph cannot contain an infinite clique. 
Nevertheless, it is known that the triangle-free random graph $G$ satisfies 
$\chi(G) \geq \omega$. Proposition {\ref{Kikyo}} below extends this observation.
A more general result will be established later in Section~3.1.

\input{fraisselimit}

\begin{definition}[Mycielskian]
\label{def:mycielskian}
 Let $A$ be a finite graph. For each vertex $v$ of $A$,
let $A_v = A \oplus_{A\setminus\{v\}} A'$ where $A'$ is a graph isomorphic to
$A$ over $A\setminus\{v\}$. The element in $A'\setminus A$
is called a sibling of $v$.
Let $\tilde{A}$ be the free amalgam of all $A_v$ with $v \in A$ over $A$.
Let $S$ be the set of all siblings in $\tilde{A}$ of vertices in $A$.
Note that $S$ is an edge-free set in $\tilde{A}$.
Finally,  let $A_M = \tilde{A} \cup \{v^*\}$ be a
graph with a new vertex $v^*$ such that 
the set of all neighbors of $v^*$ in $A_M$ is $S$.
$A_M$ is called the \emph{Mycielskian} of $A$.

We can represent $A_M = \tilde{A} \oplus_S (Sv^*)$
and $Sv^*$ is the free amalgam of all $uv^*$ with $u \in S$
over $v^*$.
Hence, the Mycielskian of a graph is obtained by free amalgamation construction.
\end{definition}


\begin{fact}[\cite{M}]
Let $A_M$ be the Mycielskian of $A$.
Then $\chi(A_M) = \chi(A)+1$.
\end{fact}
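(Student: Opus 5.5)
The plan is to prove the two inequalities $\chi(A_M)\le\chi(A)+1$ and $\chi(A_M)\ge\chi(A)+1$ separately. Both rest on reading off the adjacency structure of $A_M$ from Definition~\ref{def:mycielskian}. For $v\in A$ write $v'$ for its sibling. Since $A_v=A\oplus_{A\setminus\{v\}}A'$ with $A'\cong A$ over $A\setminus\{v\}$, the neighbours of $v'$ inside $A\setminus\{v\}$ are exactly the neighbours of $v$ in $A$. Freeness of the amalgam gives $v'\not\sim v$. Since $\tilde A$ is a free amalgam over $A$, distinct siblings are non-adjacent, so $S$ is edge-free, as the definition notes. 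Finally, $v^*$ is adjacent exactly to the elements of $S$. So $A_M$ is the usual Mycielskian, and I follow the classical argument. Put $k=\chi(A)$, which is finite since $A$ is finite, and assume $A\neq\emptyset$ so that $k\ge 1$.

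For the upper bound, take a proper coloring $f:A\to k$. Extend it by $f(v')=f(v)$ for each sibling and $f(v^*)=k$. This is proper for the following reasons. Edges inside $A$ are fine because $f$ is proper. An edge $v'w$ with $w\in A$ has $w\sim v$ in $A$, hence $f(v')=f(v)\ne f(w)$. There are no edges inside $S$. The vertex $v^*$ carries a colour used nowhere else. Hence $\chi(A_M)\le k+1$.

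For the lower bound, suppose towards a contradiction that $g:A_M\to k$ is proper. Permuting colours, we may assume $g(v^*)=k-1$, so no sibling receives colour $k-1$. Define $h$ on $A$ by $h(v)=g(v)$ if $g(v)\ne k-1$, and $h(v)=g(v')$ otherwise. Then $h$ takes values in $k-1$. To see that $h$ is proper, take an edge $vw$ of $A$. Both endpoints cannot have $g$-colour $k-1$. If neither does, then $h(v)=g(v)\ne g(w)=h(w)$. If $g(v)=k-1\ne g(w)$, then $w\sim v'$ in $A_M$, so $h(v)=g(v')\ne g(w)=h(w)$. Thus $\chi(A)\le k-1$, a contradiction, and so $\chi(A_M)\ge k+1$.

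The only delicate point is the first step: checking from the amalgamation description that $v'$ is adjacent precisely to $N_A(v)$ and that no unexpected edges appear among siblings or between $v'$ and $v$. Once this is verified, the colour-recycling argument is routine. The degenerate case $A=\emptyset$ is excluded, or handled by convention.
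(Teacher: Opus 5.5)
Your proof is correct and follows essentially the same route as the paper. For the upper bound, you copy a colouring of $A$ onto the siblings and give $v^*$ a fresh colour, a step the paper only calls easy. For the lower bound, you replace the colour class $X$ of $v^*$ in $A$ by the siblings of its vertices, and your explicit recolouring $h$ is just the paper's observation that $(A\setminus X)\cup X'$ is an isomorphic copy of $A$ coloured without the colour of $v^*$, written out vertex by vertex.
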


\begin{proof}
    Consider graphs in Definition {\ref{def:mycielskian}}. By the construction, we can easily obtain $\chi(A_M) \le \chi(A) +1$.

We show $\chi(A_M) \ge \chi(A)+1$.
    Suppose that $g\colon A_M\to \chi(A_M)$ is an optimal coloring. Let $i$ be the color of $v^*$, and let $X=\{v\in A\mid g(v) =i\}$. Note that there is no edge in $X$. Let $X'$ be the set of all siblings of vertices in $X$. By the choice of $i$, $g(x')\ne i$ holds for any $x'\in X'$. Consider the graph $(A\backslash X)\cup X'$, which is isomorphic to $A$. The graph is painted with the colors $\chi(A_M)\backslash \{i\}$, and hence we have $\chi(A) \le \chi(A_M) -1$. 
\end{proof}


\begin{proposition}\label{Kikyo}
Suppose $K$ is a class of finite graphs with the free amalgamation property (FAP),
and let M be the Fraïssé limit of $K$.
Then $\chi(M) = \omega$. 
\end{proposition}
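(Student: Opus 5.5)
The plan is to show that $K$ is closed under taking Mycielskians, and then to iterate the Fact above starting from a single edge. Iterating gives finite graphs in $K$ of every finite chromatic number. All of them embed into $M$, which forces $\chi(M)\ge n$ for every $n$. Since $M$ is countable, $\chi(M)\le\omega$ trivially, so $\chi(M)=\omega$.

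\emph{Assumptions.} We need $K$ to contain at least one graph with an edge. If every graph in $K$ is edge-free, then $M$ is edge-free and $\chi(M)=1$, so the statement must tacitly exclude this case, and I would make the exclusion explicit. Since $K$ is a Fra\"{i}ss\'{e} class, it is hereditary, so the one-edge graph $K_2$ then lies in $K$, and so does the one-point graph.

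\emph{Closure under Mycielskians.} Let $A\in K$; I check the steps of Definition \ref{def:mycielskian} in order.
\begin{enumerate}
\item For each $v\in A$, the graph $A'$ is just an isomorphic copy of $A$, and $A\setminus\{v\}\in K$ by heredity. Hence $A_v=A\oplus_{A\setminus\{v\}}A'\in K$ by FAP.
\item The graph $\tilde A$ is obtained by successively freely amalgamating $A_{v_1},A_{v_2},\dots$ over the common substructure $A$. Each step is a free amalgam of two members of $K$ over $A\in K$, so $\tilde A\in K$.
\item For the last step, the star $Sv^*$ is the iterated free amalgam of copies of $K_2$ over the single vertex $v^*$, so it lies in $K$ by FAP.
\item The set $S$ is edge-free both in $\tilde A$ and in $Sv^*$, so the induced subgraphs on $S$ coincide. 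This set is in $K$ by heredity. FAP then gives $A_M=\tilde A\oplus_S (Sv^*)\in K$.
\end{enumerate}
In each amalgamation I will check that the common part really is an induced substructure of both sides with the same induced structure, since that is exactly what FAP requires. This routine verification is the only real point of care. Setting up the amalgams over nonempty bases, as above, avoids any need for $\emptyset\in K$.

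\emph{Conclusion.} Define $A_2=K_2$ and $A_{n+1}=(A_n)_M$. By the closure argument and the Fact, each $A_n\in K$ and $\chi(A_n)=n$. By universality of the Fra\"{i}ss\'{e} limit, each $A_n$ embeds into $M$ as an induced subgraph, so $\chi(M)\ge n$ for every $n$. Hence $\chi(M)\ge\omega$. Since $M$ is countable, an injective coloring shows $\chi(M)\le\omega$, and therefore $\chi(M)=\omega$.
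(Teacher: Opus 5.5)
Your proof is correct and shares its key ingredient with the paper: FAP makes $K$ closed under Mycielskians, and together with $\chi(A_M)=\chi(A)+1$ and universality of $M$ this forces $\chi(M)=\omega$.

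The two routes differ in where the finite graphs come from.
\begin{itemize}
\item The paper argues by contradiction. It assumes $\chi(M)=n^*<\omega$ and invokes the de Bruijn--Erd\H{o}s theorem (Remark~\ref{rem-first}, 2(i)) to obtain a finite $G\subset M$ with $\chi(G)=n^*$. It then applies the Mycielskian once to that $G$.
\item You iterate the Mycielskian starting from $K_2$. This gives explicit members $A_n\in K$ with $\chi(A_n)=n$, so no compactness step is needed.
\end{itemize}
Your version is slightly more elementary and self-contained, and it shows that $K$ itself contains graphs of every finite chromatic number. The paper's version is shorter because it reuses the already-established Remark.

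Your step-by-step check that each amalgamation in the Mycielskian construction is a legitimate free amalgam of members of $K$ fills in what the paper only asserts. The nontriviality hypothesis you make explicit is exactly the paper's standing assumption that $K$ contains a graph with an edge. It is tacitly needed in the paper's argument as well, since the Mycielskian contains the edges $uv^*$.
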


\begin{proof}
    For a contradiction, suppose that $\chi(M)=n^* < \omega$.
        There exists a finite graph $G \subset M$ with $\chi(G)=n^*$ 
        by Remark {\ref{rem-first}}, 2 (i).
        Since \(G \in K\) and $K$ has FAP, 
        the Mycielskian $G'$ of $G$ belongs to $K$.  Hence, $G'$ is isomorphic to a subgraph of $M$.  But 
        $\chi(G') > \chi(G) = n^* = \chi(M)$. This is a contradiction.
%
\end{proof}
\begin{remark}
Let $G$ be the Fra\"{i}ss\'{e} limit of $K$, where $K$ has FAP.
Then $\Th(G)$ has the independence property and is therefore unstable. 
This can be shown as follows: 
Let $a,b,c \in G$ be such that $R(a,b)$ and $\neg R(a,c)$. By FAP, we can assume that $b$ and $c$ are $R$-free over $a$.
By FAP again, there is an $R$-free indiscernible sequence $\{b_ic_i\}_{i \in \omega}$ over $a$ starting with $b_0c_0=bc$.
Let $I=\{d_i\}_{i \in \omega}$ be a sequence defined by $d_{2i}=b_i$ and $d_{2i+1}=c_i$. It is an indiscernible set.
For any partition $X \cup Y$ of $\omega$, there is $a_{X,Y}$ such that $\bigwedge_{i \in X}R(a_{X,Y},d_i) \wedge \bigwedge_{j \in Y} \neg R(a_{X,Y},d_j)$. 
This means that $T$ has the independence property. 
\end{remark}

\begin{remark}\label{rem-order-prop}
\begin{enumerate}
    \item Suppose that $\bigvee_{i<n} \varphi_i(\bar x, \bar y)$ has the order property. 
    Then, some $\varphi_i(\bar x, \bar y)$ has the order property. 
    \item Suppose that $\varphi(\bar x,\bar y)$ has the order property. Then $\neg \varphi(\bar x,\bar y)$ also has the order property. 
    \item From 1 and 2, we see the following: 
    Suppose that $\bigvee_{i<n} \bigwedge_{j<n_i} \varphi_{ij}(\bar x, \bar y)$ has the order property. Then, some $\varphi_{ij}(\bar x,\bar y)$ has the order property. In particular, if $T=\Th(G)$ is unstable and has QE, then the edge relation $R(x,y)$ has the order property. 
\end{enumerate}
\end{remark}

The following lemma is implicit in the proof of Proposition 3.11 in \cite{HKS}. 
Recall that the strong type $\stp(a/A)$ is the set of all formulas of the form $E(x,a)$, where $E$ is an $A$-definable finite equivalence relation. Equivalently, $\stp(a/A)=\tp(a/\acl^{eq}(A))$. Strong types are stationary. 
We write $a \ind_A b$ if $a$ and $b$ are independent over $A$.
For further properties of strong types and independence, see \cite{PillayStability} or \cite{ShelahClassification}. 

\input{strong-type.tex}

\section{Fra\"{i}ss\'{e} Limits}

\subsection{Infinite chromatic numbers}

Recall that $K$ denotes a non-trivial infinite class of finite graphs having the hereditary property.

\input{Fraisse-infinite-chromatic.tex}

\begin{theorem}\label{unstable,omega}
Let $G$ be the Fra\"{i}ss\'{e} limit of $K$.
Suppose that the theory of $G$ is unstable. 
Then $\chi(G)=\omega$.
\end{theorem}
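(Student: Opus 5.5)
We argue by contradiction: assume $\chi(G)=n<\omega$. Then from a failure of stability we build finite graphs in $K$ whose chromatic number exceeds $n$, in the spirit of the Mycielskian argument in Proposition \ref{Kikyo}. Since $G$ is a Fra\"{i}ss\'{e} limit in a finite relational language, $\Th(G)$ is $\omega$-categorical with quantifier elimination. So by Remark \ref{rem-order-prop}(3), instability means the edge relation $R(x,y)$ itself has the order property. Concretely, for every $N$ there are vertices $a_0,\dots,a_{N-1},b_0,\dots,b_{N-1}$ in $G$ (or in $\M$, but by Remark \ref{rem-first}(2)(ii) we may pass freely between $G$ and elementary extensions, and finite configurations can be realized in $G$) such that either $R(a_i,b_j)\iff i<j$ or $R(a_i,b_j)\iff i\ge j$. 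Replacing $R$ by $\neg R$ is not allowed here, so we must handle both orientations, but they are symmetric up to reversing the order of indices.

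The key step is to extract from such half-graphs a configuration forcing many colors. The plan is to use indiscernibility and Ramsey: take an indiscernible sequence $(a_ib_i)_{i\in\mathbb Q}$ witnessing the order property for $R$. Among the $a_i$, either $R(a_i,a_j)$ holds for all $i\ne j$ or for none, and similarly for the $b_i$. If the $a$'s (or the $b$'s) form a clique, then $G$ contains arbitrarily large finite cliques, so $\chi(G)\ge\omega$ by Remark \ref{rem-first}(1), and we are done. So we may assume $\{a_i\}$ and $\{b_i\}$ are both independent sets. The induced graph on $\{a_i,b_i\}$ is then bipartite, which by itself gives only $2$ colors. We therefore have to use homogeneity to iterate.

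The idea is to build, inside $K$ (so inside $G$), shift-graph-like structures. Let $p(x,y)=\tp(a_ib_j)$ for $i<j$, which gives the edge, and $q=\tp(a_jb_i)$ for $i<j$, which gives no edge. By ultrahomogeneity and indiscernibility, the pair-types along the sequence are uniform. We then aim to show that the finite structures realizing chains $c_0,c_1,\dots,c_m$ in which consecutive pairs look like $(a_i,b_j)$ with the orderings ``glued'' lie in $K$, and that they contain copies of shift graphs or Kneser-type graphs of unbounded chromatic number. A concrete attempt is the following. In the indiscernible sequence, consider the pairs $e_{ij}=a_ib_j$ for $i<j$ and the relation ``$e_{ij}$ and $e_{jk}$ are linked''. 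Colorings of the vertices $a_i,b_j$ with $n$ colors induce a coloring of pairs $i<j$ by $(f(a_i),f(b_j))$, with at most $n^2$ colors. By Ramsey, there is a large monochromatic set, and then edges $R(a_i,b_j)$ inside it force $f(a_i)\ne f(b_j)$ only. This alone does not contradict anything, so edges among the $a$'s of different ``levels'' are needed. This is the main obstacle. What I would try first is to use instability more strongly, through the earlier strong-type lemma (the one from \cite{HKS}) included before this section. That lemma should give a type over a finite set whose realizations carry a nontrivial, non-definable-by-finite-equivalence pattern. Combined with homogeneity, this should allow amalgamating copies of a finite $F\subset G$ with $\chi(F)=n$ (from Remark \ref{rem-first}(2)(i)) over an independent set, just as free amalgamation was used in the Mycielskian construction, thereby producing a graph in $K$ of chromatic number $n+1$. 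That is a contradiction.
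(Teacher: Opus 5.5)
There is a genuine gap: the proposal stops exactly where the real work has to happen. Your opening steps are fine and match the paper's. QE gives the order property for $R$ itself. An indiscernible half-graph whose rows are cliques would already give $\chi(G)\ge\omega$, so both rows may be taken $R$-free.

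After observing that the bipartite pattern forces nothing on its own, you propose to finish with Lemma~\ref{lemma-stp} and a Mycielskian-style free amalgamation. Neither is available here:
\begin{itemize}
\item Lemma~\ref{lemma-stp} assumes $\Th(G)$ is \emph{stable}, which is exactly what fails.
\item The Mycielskian argument of Proposition~\ref{Kikyo} needs $K$ to be closed under free amalgams. The theorem does not assume this, and it can fail: the complement of a Henson graph is an unstable homogeneous graph whose age is not closed under free amalgams.
\end{itemize}
So ``amalgamating copies of $F$ over an independent set to obtain a graph in $K$ of chromatic number $n+1$'' is unsupported, and you give no concrete construction.

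The missing idea is that homogeneity turns the half-graph into an extension property for independent sets. Once you have that, no graph of larger chromatic number needs to be built.

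Any bijection between finite $R$-free sets of equal size is a partial isomorphism, so such sets are conjugate by automorphisms of $G$. Now take a $\Q$-indexed half-graph with $R$-free rows and set $I=\{a_q\}_{0<q<1}$, $J_0=\{b_r\}_{r<0}$, $J_1=\{b_r\}_{r>1}$. This gives: for every $R$-free set $J=J_0\sqcup J_1$ there is an infinite $R$-free $I$ with $I\cup J_0$ $R$-free and $I$ completely joined to $J_1$. For infinite $J$, work in a saturated model of $\Th(G)$, which is still $n$-colourable by Remark~\ref{rem-first}(2)(ii).

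Now fix colour classes $P_0,\dots,P_{n-1}$. Choose a maximal $F\subseteq n$ for which there are infinite $A_i\subseteq P_i$ $(i\in F)$ with $\bigcup_{i\in F}A_i$ $R$-free. Suppose $F\ne n$:
\begin{itemize}
\item Split each $A_i$ into two infinite halves; put the first halves into $J_0$ and the second halves into $J_1$.
\item Apply the extension property. Every vertex of $I$ has a neighbour of each colour in $F$, so infinitely many vertices of $I$ lie in a single $P_k$ with $k\notin F$.
\item Those vertices together with $J_0$ form an $R$-free set, contradicting the maximality of $F$.
\end{itemize}
Hence $F=n$. The extension property with $J_0=\emptyset$ gives a common neighbour of $\bigcup_{i<n}A_i$, and that vertex admits no colour. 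This is the argument that should replace your last paragraph.
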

\begin{proof}
  Assume that $\chi(G) = n \in \omega$ towards a contradiction.
By instability and quantifier elimination, it follows that $R$ has the order property.
  So, there exist sequences $\{a_q\}_{q \in \Q}$ and $\{b_r\}_{r \in \Q}$ (in the monster model) such that $R(a_q,b_r)$ if and only if $q < r$. 
  Since $\Th(G)$ is $\omega$-categorical, we can assume that $\{a_q\}_{q \in \Q}$ and $\{b_r\}_{r \in \Q}$
  are sequences in $G$.
   By the finiteness of $\chi(G)$, we can assume that $\{a_q\}_{q \in \Q}$ and $\{b_r\}_{r \in \Q}$ are $R$-free sequences.
   Consider $I=\{a_q\}_{q \in (0,1)}$, $J_0=\{b_r\}_{r \in (-\infty,0)}$ and $J_1=\{b_r\}_{r \in (1,+\infty)}$. 
   Then, $I \cup J_0$ is an $R$-free sequence, while all elements in $I=\{a_q\}_{a \in (0,1)}$ are $R$-adjacent to all elements in $J_1$. 
   From this observation, using elimination of quantifiers, we have the following:
\begin{claim}
    Let $J$ be an infinite $R$-free set. Let $J=J_0 \cup J_1$ be a partition. Then, there is an infinite $R$-free set $I$ such that $I \cup J_0$ is $R$-free, and that all elements in $I$ is $R$-adjacent to all elements in $J_1$. 
\end{claim}
    Let $P_0 \cup P_1 \cup \dots \cup P_{n-1}$ be a partition of $G$ into vertex color classes. 
\begin{claim}
    There exist infinite subsets $A_i \subset P_i$ for each $i<n$ such that $\bigcup_{i<n} A_i$ is $R$-free.
\end{claim}
Suppose that this is not the case and choose a maximal set $F \subset n$ such that there exist infinite $A_i \subset P_i$ $(i \in F)$ with $\bigcup_{i \in F} A_i$ being $R$-free. 
For each $i \in F$, let $A_i=A_i^0 \cup A_i^1$ be a partition of \(A_i\) into two infinite subsets, and define $J_0= \bigcup_{i \in F}A_i^0$ and $J_1=\bigcup_{i \in F} A_i^1$. 
Using Claim A, we can find an infinite set $I$ such that (i) $I \cup \bigcup_{i \in F}J_0$ is $R$-free, and (ii) all elements in $I$ are $R$-adjacent to all elements in $J_1$. 
By (ii), $I \cap \bigcup_{i \in F} P_i$ is empty. 
Since $I$ is infinite, taking a subset of $I$, we can assume $I$ is contained in $P_k$ for some $k \notin F$.
But then, by (i), $F \cup \{k\}$ contradicts the maximality of $F$. 
(End of Proof of Claim B)
\medbreak
Now, consider $A:=\bigcup_{i<n} A_i$. 
$A$ is $R$-free, so there is $a$ such that $a$ is adjacent to every element in $A$. So, $a$ does not belong to $\bigcup_{i<n} P_i$. A contradiction. 
\end{proof}

\subsection{Finite Chromatic Numbers}
\begin{theorem}
    Let $G$ be a countable homogeneous graph (a graph constructed by Fra\"{\i}ss\'e construction). 
    Suppose $\chi(G)=n \in \omega$. 
    Then $G$ is isomorphic to one of the following graphs:
    \begin{itemize}
        \item The free amalgam of infinitely many copies of $K_n$, the complete graph with $n$ vertices;
        \item $K_{\omega}^n$, the complete \(n\)-partite graph in which every partition class has exactly $\omega$ vertices.
    \end{itemize}
\end{theorem}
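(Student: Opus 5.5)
We will derive this from the classification of countable homogeneous graphs due to Lachlan and Woodrow, combined with Theorem \ref{unstable,omega} and Proposition \ref{Kikyo}. By Lachlan--Woodrow, a countably infinite homogeneous graph $G$ is one of the following: a disjoint union $mK_k$ of complete graphs with $m,k\le\omega$, not both finite; the complement of such a graph, i.e.\ a complete $m$-partite graph with parts of size $k$; the pentagon $C_5$ or the line graph $L(K_{3,3})$, which are finite and so excluded; the Henson graph $H_k$ (the generic $K_k$-free graph) or its complement; or the random graph. The plan is to run through this list and keep only the cases with $\chi(G)=n<\omega$.

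First we remove the unstable cases. The random graph, the Henson graphs $H_k$ for $k\ge 3$, and their complements all have the independence property (an instance of the argument in the remark following Proposition \ref{Kikyo}), so they are unstable. Theorem \ref{unstable,omega} then gives them infinite chromatic number. We can also see this directly. The random graph and the complement of $H_k$ contain infinite cliques. $H_k$ is the Fra\"{\i}ss\'e limit of a class with FAP, so Proposition \ref{Kikyo} applies to it.

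The remaining stable cases are disjoint unions of cliques and complete multipartite graphs.
\begin{itemize}
\item $mK_k$ has chromatic number $k$, so finiteness forces $k=n<\omega$. Since $G$ is infinite, $m=\omega$. The result is $\omega K_n$, which is exactly the free amalgam of infinitely many copies of $K_n$.
\item The complete $m$-partite graph with parts of size $k$ contains a clique of size $m$ (choose one vertex from each part). Hence $m\le n$. A coloring that is constant on each part shows $\chi=m$, so $m=n$. Since $G$ is infinite and $m$ is finite, $k=\omega$, and we get $K^n_\omega$.
\end{itemize}
For completeness we also check that both graphs are homogeneous and have chromatic number $n$, which is immediate.

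The main obstacle is whether we may invoke Lachlan--Woodrow as a black box. If the authors want a self-contained argument, the hard step is showing that a stable homogeneous graph with finite chromatic number is $\omega K_n$ or $K^n_\omega$. To do this, we would use $\omega$-categoricity, quantifier elimination and stability to see that non-adjacency, or adjacency, is an equivalence relation. The idea is that if neither $R$ nor $\neg R$ is transitive, homogeneity produces enough configurations to build an order or an infinite clique. In that case we would first try to reproduce the Lachlan--Woodrow case analysis in the restricted setting $\chi<\omega$. The restriction helps because it excludes infinite cliques, and by Remark \ref{rem-first} a homogeneous $G$ with $\chi(G)=n$ embeds every finite subgraph of chromatic number at most $n$ that it needs.
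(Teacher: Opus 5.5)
Your argument is correct, but it takes a genuinely different route from the paper.

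\textbf{Your route.} You use the Lachlan--Woodrow classification as a black box and sift its list. The unstable members (random graph, $H_k$, and their complements) are removed either by Theorem~\ref{unstable,omega} or directly, via an explicit infinite clique or Proposition~\ref{Kikyo} for $H_k$. The stable members $mK_k$ and their complements you check by hand, and your bookkeeping there is right: $k=n$, $m=\omega$ in the first case, and $m=n$, $k=\omega$ in the multipartite case, using that $G$ is infinite.

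\textbf{The paper's route.} The paper never invokes the classification. It uses Theorem~\ref{unstable,omega} only to obtain stability, then argues intrinsically. By homogeneity every vertex has the same degree.
\begin{itemize}
\item[(a)] \emph{Finite degree.} $\omega$-categoricity makes $\acl(a)$ finite, so the connected component of $a$ is finite. Comparing a non-adjacent pair inside one component with a pair taken from two different components shows, by homogeneity, that each component is a clique, necessarily $K_n$.
\item[(b)] \emph{Infinite degree.} Ramsey's theorem, plus the absence of infinite cliques, gives an $R$-free indiscernible sequence of neighbours $b_i$ of $a$. By homogeneity this sequence is a Morley sequence, so $a\ind b_0$ and $R(a,b_0)$. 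Lemma~\ref{lemma-stp} then forces $\stp(a)\neq\stp(b_0)$. By QE the separating finite equivalence relation is $x=y\vee R(x,y)$ or $\neg R(x,y)$. The former is refuted by transitivity through the $R$-free $b_i$, so $\neg R$ is an equivalence relation with $n$ infinite classes.
\end{itemize}

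\textbf{Comparison.} Your route is shorter and yields the full list of homogeneous graphs together with their chromatic numbers. However, it rests on a long external combinatorial classification and makes Theorem~\ref{unstable,omega} essentially dispensable here. The paper's route is self-contained and displays exactly the stability and strong-type mechanism the paper is built around.

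\textbf{Your fallback sketch.} The sketch for a self-contained proof (``if neither $R$ nor $\neg R$ is transitive, homogeneity produces an order or an infinite clique'') is not yet an argument. The degree dichotomy above, with finite algebraic closure in one case and Lemma~\ref{lemma-stp} in the other, is the concrete way to carry out that step. Note also that in the finite-degree case the relation that becomes an equivalence relation is $x=y\vee R(x,y)$, not $R$ itself, since $R$ is irreflexive.
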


\begin{proof}
    Observe that $T=\Th(G)$ is stable. This follows from the fact that an unstable homogeneous graph must have infinite chromatic number (see Theorem \ref{unstable,omega}). 
    We will use this stability condition later in the argument.
    First, we consider the case where every node has finite degree.
    By homogeneity, there exists $m \in \omega$ such that $\deg(a)=m$ for all $a \in G$. 
    By $\omega$-categoricity, $\acl(a)$ is finite for all $a \in G$. Since the connected component $C(a)$ of $a$ is contained in $\acl(a)$, $C(a)$ is also a finite set. 
    \begin{claim}
         $C(a)$ is isomorphic to $K_n$.   
    \end{claim}
    First, we show that $C(a)$ is a complete graph. 
    Suppose, for a contradiction, that there exist distinct $d_0,d_1 \in C(a)$ such that no edge directly connects \( d_0 \) and \( d_1 \). 
    Choose $e_0$ and $e_1$ such that $C(e_0) \cap C(e_1)=\emptyset$. 
    The quantifier-free types of the pairs \( (d_0, d_1) \) and \( (e_0, e_1) \) are the same.
    However, there is no automorphism sending \( d_0, d_1 \) to \( e_0, e_1 \),  as the former lie in the same connected component while the latter do not.  This contradicts the homogeneity of the structure.
Since \( \chi(G) = n \), the cardinality of \( C(a) \) must be \( n \).
    (End of Proof of Claim A)
\medbreak
By Claim A, we conclude that $G$ is the free amalgam of infinitely many copies of $K_n$. 
\medbreak
Now we consider the case where every node has infinite degree. 
Let $a \in G$, and choose distinct elements $b_i$ $(i \in \omega)$ such that each pair $(a, b_i)$ forms an $R$-edge. 
By Ramsey's theorem, we can assume that the sequence $B=\{b_i\}_{i \in \omega}$ forms an $R$-free indiscernible sequence. 
Since a Morley sequence over \( \emptyset \) always exists and must be \( R \)-free,  
it follows by homogeneity that \( B \) is itself a Morley sequence.
So, $a$ and $b:=b_0$ are independent over $\emptyset$, and $R(a,b)$ holds. 
By Lemma~\ref{lemma-stp}, if \( \stp(a) = \stp(b) \), then a contradiction arises easily.  
Therefore, there must exist a finite equivalence relation \( E(x, y) \) such that \( \neg E(a, b) \) holds.
By quantifier elimination (QE), it is easy to see that \( E(x, y) \) is equivalent to a disjunction of formulas drawn from the following list:
\[
x = y, \quad R(x, y), \quad x \neq y \wedge \neg R(x, y).
\]
 The formula \( x = y \) must appear in the disjunction, since \( E(x, y) \) is reflexive.  
However, if all three formulas appear in the disjunction,  
then \( E(x, y) \) becomes the trivial equivalence relation that holds for all pairs \( x, y \in G \).  
This contradicts the assumption that \( \neg E(a, b) \) holds. 
So, by a simple calculation, we see that the remaining possibilities for $E$ are 
either  
\[
E_0(x, y) \equiv (x = y \vee R(x, y)) \quad \text{or} \quad E_1(x,y) \equiv\neg R(x, y).
\]
 \( E=E_0\) is impossible.  
Indeed, by our assumption \( E_0(a, b_i) \) holds for all \( i \in \omega \), so each \( b_i \) lies in the same \( E_0 \)-class as \( a \).  
Since \( E_0 \) is transitive, it follows that \( E_0(b_i, b_j) \) must hold for all distinct \( i, j \),  
which implies \( R(b_i, b_j) \) for all \( i \neq j \).
This contradicts the assumption that the sequence \( \{b_i\}_{i \in \omega} \) is \( R \)-free.  
Thus, we must have \( E = E_1 \).  
In this case, each \( E_1 \)-class is \( R \)-free, and any two elements from distinct classes are adjacent.  
It follows that the number of equivalence classes is exactly \( n \), and each class is infinite.  
Hence, \( G \) is isomorphic to the complete \( n \)-partite graph.   
\end{proof}

\section{Hrushovski Construction}
Let $\alpha$ be a real number such that $0\leq \alpha \leq 1$. 
For a finite graph $F$, define $\delta_\alpha(F)$ as the number $|F|-\alpha \cdot (\text{the number of edges in $F$}$).
Let $K_\alpha$ be the class of all finite graphs $F$ such that every subgraph $F_0 \subset F$ satisfies $\delta_\alpha(F_0) \geq 0$.  
Let $A \subset B$ be graphs with $A$ finite.
We write $A \leq_\alpha B$ if, 
whenever 
$A \subseteq C \subseteq  B$ with $C$ finite, 
it follows that $\delta_\alpha(A) \leq \delta_\alpha(C)$.
We write $A <_\alpha B$ if, 
whenever 
$A \subsetneq C \subseteq B$ with $C$ finite, 
it follows that $\delta_\alpha(A) < \delta_\alpha(C)$.
$A \leq_\alpha B$ and $A <_\alpha B$ are equivalent if $\alpha$ is an irrational
number, but they are different if $\alpha$ is a rational number.

Let $K$ be a subclass of $K_\alpha$.
Let $\leq$ denote $\leq_\alpha$ or $<_\alpha$.
We will assume $\alpha < 1$ when we work with $<_\alpha$.

$(K, \leq)$  has the \emph{amalgamation property} (AP)
if whenever $A \leq B$, $A \leq C$ and $A, B, C \in K$ 
there is a graph $D \in K$, graph embeddings $f_B: B \to  D$ and
$f_C: C \to D$ such that 
$f_B(B) \leq D$, $f_C(C) \leq D$ and $f_B|A = f_C|A$. 
$(K, \leq)$ has the \emph{free amalgamation property} (FAP)
if whenever $D = BC$ is the free amalgam of $B$ and $C$ over $A = B\cap C$
with $A \leq B$ and $A \leq C$ then $D$ belongs to $K$.
If $(K, \leq)$ has FAP then it has AP.

A graph $M$ is a \emph{generic structure} for $(K, \leq)$
if (i) $M$ is countable, (ii) any finite substructures of $M$ belong to $K$,
(iii) for any finite substructures $A$ of $M$ there is a finite 
substructure $B$ of $M$ satisfying $A \subseteq B \leq M$,
(iv) for any finite substructures $A \leq M$ and for any structures $B \in K$
with $A \leq B$ there is a graph embedding $f: B \to M$ such that
$f(B) \leq M$ and $f(x) = x$ for any $x \in A$.
If the coefficient $\alpha$ of the predimension function is 0 then a generic structure for $(K, \leq)$ is 
just the Fra\"{\i}ss\'e limit of $K$.

We call $(K, \leq)$ an \emph{amalgamation class} if
$K$ contains an empty graph, 
$K$ is closed under isomorphic images
and substructures, and $(K, \leq)$ has AP.
Recall that we are assuming $K$ is non-trivial (i.e., $K$ contains a graph with an edge) from the preliminaries (Section 2).
Any amalgamation class has a unique generic structure up to isomorphism.
Most of the known amalgamation classes have FAP. 

%

\begin{proposition}
\label{prop:upperbound}
Let $\alpha$ be a real number with $0 < \alpha \leq 1$ and
$(K, \leq)$ an amalgamation class where 
$\leq$ denotes $\leq_\alpha$ or $<_\alpha$.
Let $k^*$ be a natural number with $\alpha k^*  > 2$.
Let $G$ be the generic structure for $(K, \leq)$.
Then $\chi(G) \leq k^*$. 
\end{proposition}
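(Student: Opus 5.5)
Since $\chi(G) \le k^*$ for an infinite graph is witnessed on finite subgraphs (the compactness argument of Remark~\ref{rem-first}, or directly the De Bruijn--Erd\H{o}s theorem), it suffices to show $\chi(F) \le k^*$ for every finite $F \subset G$. By clause (ii) of the definition of a generic structure, every finite $F \subset G$ lies in $K \subseteq K_\alpha$. Hence every subgraph $F_0 \subseteq F$ satisfies $\delta_\alpha(F_0) \ge 0$, that is,
\[
e(F_0) \le \frac{|F_0|}{\alpha}
\]
where $e(F_0)$ denotes the number of edges of $F_0$. I will use only this sparsity condition; the amalgamation class structure and the relation $\leq$ play no further role.

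The key step is a degeneracy bound. For any nonempty $F_0 \subseteq F$, the sum of degrees in $F_0$ is $2e(F_0) \le 2|F_0|/\alpha$, so the average degree is at most $2/\alpha < k^*$. Therefore some vertex $v \in F_0$ has degree in $F_0$ at most $\lfloor 2/\alpha \rfloor \le k^*-1$; the last inequality holds because $2/\alpha < k^*$ and $k^*$ is an integer. Thus every finite subgraph of $G$ is $(k^*-1)$-degenerate.

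The coloring then follows by the usual greedy argument, by induction on $|F|$. Choose a vertex $v$ of degree at most $k^*-1$ in $F$. By induction, color $F \setminus \{v\}$ with $k^*$ colors. Since $v$ has at most $k^*-1$ neighbours, at least one color is unused among them, and we give that color to $v$. This yields $\chi(F) \le k^*$ for every finite $F \subset G$.

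Finally, pass to $G$ by compactness, exactly as in the proof of Remark~\ref{rem-first} 2(i). Add a binary symbol $E$ and consider $\mathrm{Diag}(G)$ together with the sentence that $E$ is an equivalence relation with at most $k^*$ classes, each $R$-free. Every finite subset of this theory is satisfiable because each finite subgraph of $G$ is $k^*$-colorable. A model of the theory contains $G$ as an induced subgraph, and its $E$-classes give a $k^*$-coloring, so $\chi(G) \le k^*$. Alternatively, since $G$ is countable, a K\H{o}nig's lemma argument over an enumeration of $G$ gives the same conclusion.

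No step here is a serious obstacle. The only point needing care is the strictness of the inequality $\alpha k^* > 2$: it forces the minimum degree of every subgraph to be at most $k^*-1$, not $k^*$, and that is exactly what the greedy coloring with $k^*$ colors requires.
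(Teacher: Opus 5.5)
Your proof is correct and follows the paper's argument: you reduce to finite subgraphs, use $\delta_\alpha \ge 0$ together with $\alpha k^* > 2$ to find a vertex of degree less than $k^*$, and finish with greedy coloring by induction on the size of the graph. Your explicit compactness step, using $\mathrm{Diag}(G)$ and an equivalence relation with at most $k^*$ $R$-free classes, is a slightly more careful justification of the reduction than the paper's citation of Remark~\ref{rem-first}~2(i), which is stated under the hypothesis that $\chi(G)$ is already finite.
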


\begin{proof} By Remark {\ref{rem-first}}, 2 (i),
it is enough to show that any finite substructure of $G$
can be $k^*$-colored.

 By induction on $n \in \omega$, we show that every graph $A$ in $K$ 
with $|A| = n$ has a valid coloring using $k^*$ colors. 
This is trivial if $|A| \leq k^*$.

Let $A \in K$ with $|A|=n > k^*$. 
   \begin{claim}
       There exists a vertex $a \in A$ with degree less than $k^*$ in $A$.
   \end{claim}
   Suppose otherwise. Then every vertex in $A$ has degree 
at least $k^*$.
So, there are at least $k^*n/2$ edges in $A$. 
Since $\alpha k^* > 2$,
we have $\delta(A)=n-\alpha\cdot k^* n/2 <0$, leading to a contradiction. (End of Proof of Claim A)
   \medbreak
   Choose $a \in A$ as described in the above claim.
   By the induction hypothesis, there exists a valid coloring $g:A \setminus \{a\} \to k^*$.
   Let $N \subset A \setminus \{a\}$ be the set of vertices adjacent to $a$ in $A$. We have $|N| < k^*$ by the choice of $a$. We can choose a color $k \in k^*$
such that $k \neq g(v)$ for any $v \in N$.
   Then, $f=g \cup \{(a, k)\}$ is a valid coloring of $A$ with $k^*$ colors. 
   Thus, any $A \in K$  with $|A| = n$ is $k^*$-colored.
\end{proof}

\begin{corollary}
Let $\alpha$ be a real number with $2/3 < \alpha \leq 1$ and
$(K, \leq)$ an amalgamation class where 
$\leq$ denotes $\leq_\alpha$ or $<_\alpha$.
Let $G$ be the generic structure for $(K, \leq)$.
Assume that $K$ contains arbitrarily long paths.
Then $\chi(G) = 3$.
\end{corollary}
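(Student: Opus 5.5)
The upper bound comes from Proposition~\ref{prop:upperbound}. Since $\alpha > 2/3$, we have $3\alpha > 2$, so taking $k^* = 3$ gives $\chi(G) \leq 3$.

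For the lower bound it is enough to find a finite subgraph of $G$ that is not 2-colorable, i.e.\ an odd cycle. This is the main step, and I will use the long paths in $K$ together with the genericity of $G$.

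\textbf{Building an odd cycle in $K$.} Take $C_m$ for a suitable odd $m$. Its predimension is $m - \alpha m = m(1-\alpha) \geq 0$, and every proper subgraph is a disjoint union of paths (a forest), so its predimension is at least its number of components, hence positive. Thus $C_m \in K_\alpha$. What is not automatic is that $C_m$ lies in the given class $K$, since $K$ is only some amalgamation class inside $K_\alpha$. So I will build it by amalgamation and genericity instead.

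Let $P$ be a path in $K$ with $m$ vertices $v_1,\dots,v_m$. Since $K$ is closed under substructures, all the shorter paths are in $K$ too. Let $A = \{v_1, v_m\}$ be its endpoints, an edge-free pair, so $\delta(A) = 2$. Consider any $C$ with $A \subseteq C \subseteq P$. Each inner vertex added together with its edges contributes $1$ minus $\alpha$ times its number of new edges, and a union of subpaths has strictly more vertices than edges among the vertices outside $A$ when counted appropriately. Carrying out this count should give $A \leq P$ (and $A <_\alpha P$ when $\alpha < 1$).

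\textbf{Realizing the path in $G$.} Next I need two nonadjacent vertices $a, b \in G$ with $\{a,b\} \leq G$. For this, take a closed finite set containing a single vertex; by property (iii) and amalgamation, two points can be placed freely. Then, by property (iv), I embed $P$ over $\{a,b\}$ with $v_1 \mapsto a$ and $v_m \mapsto b$.

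\textbf{Closing the cycle.} I then want an edge between $a$ and $b$, which would create an odd cycle. This is the delicate part. The simpler route is to use $A = $ an edge $\{u,w\}$ with $u, w$ adjacent, and extend by the path $P$ joining $u$ to $w$ through $m-2$ new vertices. The result is the cycle $C_m$ over the edge, with $\delta(C_m) - \delta(\text{edge}) = (m-2)(1-\alpha) - \alpha$. This difference is positive only if $m$ is large relative to $\alpha/(1-\alpha)$, which requires $\alpha < 1$.

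This forces a case split:
\begin{itemize}
\item For $\alpha < 1$, choose $m$ odd and large. Then the edge is closed in $C_m$, so $C_m \in K$ should follow from amalgamating the path with the edge. Here the length hypothesis on paths is exactly what is needed to make $C_m \in K$ via FAP-like reasoning.
\item For $\alpha = 1$, the cycle has $\delta = 0$, which is allowed for $\leq_1$ with $C_m \in K_1$. Its membership in $K$ must again come from amalgamating the path over the endpoint pair.
\end{itemize}

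Once any odd cycle sits in $G$, we get $\chi(G) \geq 3$, and combined with the upper bound, $\chi(G) = 3$.
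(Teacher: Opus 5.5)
Your upper bound is correct and matches the paper. The lower bound has a genuine gap: you never actually obtain an odd cycle in $K$ or in $G$.

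In the ``closing the cycle'' step, once $a,b$ have been chosen nonadjacent, nothing can later add the edge $ab$. The alternative, extending an edge $\{u,w\}$ to $C_m$ via property (iv), presupposes $C_m\in K$, which is exactly what you noted is not automatic. ``Amalgamating the path with the edge'' is not a legitimate move either. The pair $\{u,w\}$ is edge-free in $P$ but an edge in the other structure, so there is no common substructure to amalgamate over. Moreover, in any amalgam both factors embed as induced subgraphs, so no edge between two vertices of the same factor is ever created. FAP is also not a hypothesis of this corollary.

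The missing idea, which is the paper's argument, is to use two long paths of \emph{opposite parity} with the same endpoint pair. For $\alpha<1$ and $m$ large, the edge-free endpoint pair of an $m$-vertex path $P$ is closed in $P$. Intermediate sets with $k\ge 2$ components have $\delta\ge 2(1-\alpha)+2\alpha=2$, and $\delta(P)=m(1-\alpha)+\alpha\ge 2$ for large $m$. Take $L_1$ of odd length and $L_2$ of even length. Their endpoint pairs are isomorphic and closed, so AP over them yields $D\in K$ in which the two paths form a closed walk of odd length. That walk contains an odd cycle, and the cycle lies in $G$. Equivalently, in your own setup, embed both $L_1$ and $L_2$ over the same closed nonadjacent pair $\{a,b\}\le G$ using (iv).

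Your $\alpha=1$ bullet cannot be completed at all. At $\alpha=1$ the endpoint pair is not closed in a path, since $\delta_1(\{v_1,v_m\})=2>1=\delta_1(P)$, so your claim $A\le P$ fails there. In fact the lower bound is false at $\alpha=1$. The class of finite forests with $\le_1$ is an amalgamation class containing all paths: if $A\le_1 B$ with $B$ a forest, no path through $B\setminus A$ joins two vertices of $A$, so free amalgams over closed sets remain forests. Its generic structure is therefore a forest, so $\chi(G)=2$. Any correct argument must restrict to $\alpha<1$. That includes the paper's, whose claim that long paths have closed endpoint pairs also needs $\alpha<1$.
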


\begin{proof}
We have $3 \alpha > 2$ because $\alpha > 2/3$. By Proposition {\ref{prop:upperbound}}, $\chi(G) \leq 3$.
Choose sufficiently long paths $L_1$ and $L_2$.
Let $v_1, u_1$ be both ends of $L_1$ and
$v_2, u_2$ be both ends of $L_2$.
Since $L_1$ and $L_2$ are long, we can assume that $\{v_1, u_1\} \leq L_1$
and $\{v_2, u_2\} \leq L_2$. Also, we can assume that $L_1$ has 
an odd length and $L_2$ has an even length.
Since $\{v_1, u_1\}$ and $\{v_2, u_2\}$ are isomorphic graphs,
there is a graph $D$ in $K$ such that there are embeddings $f_1: L_1 \to D$
and $f_2: L_2 \to D$ satisfying $f_1(v_1) = f_2(v_2)$ and 
$f_1(u_1) = f_2(u_2)$.
Then $f_1(L_1) \cup f_2(L_2)$ must contain a cycle of odd length.
Hence, $G$ must have a cycle of an odd length as a substructure.
Since any cycle of an odd length has chromatic number 3,
we have $3 \leq \chi(G)$. Therefore, $\chi(G) = 3$.
\end{proof}

Assuming FAP on $(K, \leq)$, we present some result
on lower bounds of $\chi(G)$.

\begin{lemma}
\label{lem:closedness}
 Let $A$ be a finite graph. Suppose $\alpha < 1/\varDelta(A)$
where $\varDelta(A)$ denotes the maximum degree of vertices in $A$.
Then $B <_{\alpha} A$ (hence $B \leq_{\alpha} A$) 
for any substructure $B$ of $A$.
\end{lemma}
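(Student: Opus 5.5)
We must show that for $B \subsetneq C \subseteq A$ we have $\delta_\alpha(B) < \delta_\alpha(C)$. The plan is to compare the gain in vertices with the gain in edges directly. Since $A$ is finite, every such $C$ is finite, so this strict inequality is exactly what $B <_\alpha A$ requires. The implication $B <_\alpha A \Rightarrow B \leq_\alpha A$ is immediate from the definitions, since strict inequality gives the weak one.

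Fix $B \subsetneq C \subseteq A$ and put $k = |C \setminus B| \geq 1$. We have
\[
\delta_\alpha(C) - \delta_\alpha(B) = k - \alpha\, e,
\]
where $e$ is the number of edges of $C$ not lying in $B$. Each such edge has at least one endpoint in $C \setminus B$. Counting edges by an endpoint in $C \setminus B$ therefore gives
\[
e \leq \sum_{v \in C\setminus B} \deg_C(v) \leq k\,\varDelta(A).
\]
Edges with both endpoints in $C \setminus B$ are counted twice, which only helps the inequality.

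Hence $\delta_\alpha(C) - \delta_\alpha(B) \geq k\bigl(1 - \alpha\varDelta(A)\bigr)$. This is strictly positive because $k \geq 1$ and $\alpha\varDelta(A) < 1$.

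The only point needing care is the degenerate case $\varDelta(A) = 0$, where we interpret $1/\varDelta(A) = \infty$. Then $e = 0$, so the difference is $k > 0$ and the argument still works. I do not expect any step to be a real obstacle; the key step is the edge count by endpoints in $C \setminus B$.
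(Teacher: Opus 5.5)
Your proof is correct and rests on the same observation as the paper's: each vertex of $C\setminus B$ brings at most $\varDelta(A)$ new edges, so $\delta_\alpha$ grows by at least $1-\alpha\varDelta(A)>0$ per added vertex. The paper instead adds vertices one at a time along a chain $B=B_0\subset\cdots\subset B_k=A$ and concatenates the steps, whereas your one-shot count checks every intermediate $C$ directly and so does not need the transitivity of $<_\alpha$ that the paper's chain argument tacitly uses.
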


\begin{proof}
 We can represent $B = B_0 \subset B_1 \subset \cdots \subset B_k = A$
where each $B_i$ is a substructure of $A$ and 
$B_{i+1} = B_i \cup \{v_i\}$ for each $i = 0$, $\ldots$, $k-1$.
Since the number of edges between $v_i$ and $B_{i-1}$ is at most 
$\varDelta(A)$, we have $B_i <_\alpha B_{i+1}$ for
$i = 0$, $\ldots$, $k-1$.
Hence $B <_{\alpha} A$.
\end{proof}

%
%
%

Recall that the Mycielskian of a graph is obtained by free amalgamation construction.
With Lemma {\ref{lem:closedness}}, we have the following:

\begin{lemma}
Let $\alpha$ be a real number with $0 \leq \alpha \leq 1$ and
$(K, \leq)$ an amalgamation class with FAP where 
$\leq$ denotes $\leq_\alpha$ or $<_\alpha$.
Suppose $A\in K$ and $A'$ is the Mycielskian of $A$.
If $\alpha < 1/\varDelta(A')$ then $A'$ belongs to $K$.
\end{lemma}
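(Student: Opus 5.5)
The idea is to build $A'$ from $A$ by the chain of free amalgamations in Definition~\ref{def:mycielskian}. At each step we check that the base is $\le$-closed in both factors, so that FAP keeps the new graph in $K$. Lemma~\ref{lem:closedness} supplies the closedness: every graph we meet is a substructure of $A'$, so its maximum degree is at most $\varDelta(A')$. Hence $\alpha<1/\varDelta(A')$ gives $B<_\alpha C$, and so $B\le_\alpha C$, for every substructure $B$ of any such $C$. Since $K$ is closed under isomorphic images and substructures, we may pass freely to copies.

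The first step builds $A_v=A\oplus_{A\setminus\{v\}}A'$ for each $v\in A$. Both factors are copies of $A$, hence lie in $K$. By Lemma~\ref{lem:closedness} (degrees in $A$ are bounded by degrees in $A'$), $A\setminus\{v\}\le A$ and $A\setminus\{v\}\le A'$. FAP then gives $A_v\in K$.

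The second step builds $\tilde A$ by iterated free amalgamation over $A$. Enumerate $A=\{v_1,\dots,v_m\}$ and set $D_1=A_{v_1}$ and $D_{j+1}=D_j\oplus_A A_{v_{j+1}}$. Each $D_j$ and each $A_v$ is a substructure of $A'$, because the free amalgam adds no edges and $\tilde A\subset A'$. So Lemma~\ref{lem:closedness} applied to $D_j$ and to $A_{v_{j+1}}$ gives $A\le D_j$ and $A\le A_{v_{j+1}}$, and FAP gives $D_{j+1}\in K$. Inductively $\tilde A=D_m\in K$.

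The last step forms $A'=\tilde A\oplus_S(Sv^*)$. The graph $Sv^*$ is a star, namely the free amalgam over $\{v^*\}$ of the single edges $uv^*$. It is in $K$ because it is itself a free amalgam: the edges $uv^*$ are substructures of $A'$, and $\{v^*\}\le uv^*$ by Lemma~\ref{lem:closedness}. Alternatively, it is a substructure of $\tilde A\cup\{v^*\}$ once we show that graph lies in $K$. The edge-free set $S$ belongs to $K$ as a substructure of $\tilde A$. Then $S\le\tilde A$ and $S\le Sv^*$ by Lemma~\ref{lem:closedness}, since both are substructures of $A'$, and FAP yields $A'\in K$.

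The main obstacle is bookkeeping in the case $\alpha=0$. Then $1/\varDelta(A')$ should be read as $+\infty$, or we simply note that $\le_0$ is just inclusion for substructures. Also, when we use $<_\alpha$ we must have $\alpha<1$, which follows from the hypothesis whenever $\varDelta(A')\ge1$. The degree bound must be checked in the ambient graph of each amalgamation step, not only in $A'$. This holds because every intermediate graph embeds as an induced subgraph of $A'$, so all its degrees are at most $\varDelta(A')$.
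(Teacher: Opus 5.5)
Your proposal is correct and follows the route the paper intends. The paper only remarks that the Mycielskian is built by successive free amalgamations and appeals to Lemma~\ref{lem:closedness}; you supply the step-by-step check that each base is closed in both factors, since every intermediate graph is an induced subgraph of $A'$ and so has maximum degree at most $\varDelta(A')$.

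Two small fixes are needed. First, the single edges $uv^*$ lie in $K$ because of the standing non-triviality assumption on $K$ together with closure under substructures and isomorphic images, not because they sit inside $A'$, which is not yet known to be in $K$. Second, drop your ``alternative'' justification for $Sv^*$: $\tilde A\cup\{v^*\}$ is $A'$ itself, so that argument is circular.
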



Now, we have the following proposition.

\begin{proposition}
\label{prop:lowerbound}
Let $\alpha$ be a real number with $0 < \alpha \leq 1$ and
$(K, \leq)$ an amalgamation class with FAP where 
$\leq$ denotes $\leq_\alpha$ or $<_\alpha$.
Let $G$ be the generic structure for $(K, \leq)$.
For any integer $n > 0$ there is $\varepsilon > 0$ such that
if $0 < \alpha < \varepsilon$ then $\chi(G) \geq n$.
\end{proposition}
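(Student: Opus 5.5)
The plan is to build, independently of $\alpha$, a sequence of finite graphs $M_1, M_2, \dots$ with $\chi(M_k) = k$ by iterating the Mycielskian. Then, for the given $n$, we choose $\varepsilon$ small enough that $M_n$ (or a suitable starting graph together with all its iterated Mycielskians up to chromatic number $n$) lies in $K$, and hence embeds in $G$. Concretely, we start with $M_2 = K_2$, a single edge. Since $K$ is non-trivial, it contains a graph with an edge, and as $K$ is closed under substructures, $K_2 \in K$. We then set $M_{k+1} = (M_k)_M$. By the Fact (\cite{M}), $\chi(M_{k+1}) = \chi(M_k)+1$, so $\chi(M_n) = n$. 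We put
\[
\varepsilon = \min\{\,1/\varDelta(M_k) : 3 \le k \le n\,\}.
\]
This is a positive number depending only on $n$, since each $M_k$ is a fixed finite graph.

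Suppose $0<\alpha<\varepsilon$. By induction on $k$ we show $M_k \in K$ for $2\le k\le n$. The base case $k=2$ was handled above. For the inductive step, assume $M_k\in K$ with $k<n$. Since $\alpha < 1/\varDelta(M_{k+1})$, the preceding lemma (Mycielskian membership under FAP, relying on Lemma~\ref{lem:closedness}) gives $M_{k+1}\in K$. This step is where the free amalgamation property is used: each stage of the Mycielskian construction in Definition~\ref{def:mycielskian} is a free amalgam over a substructure, and by Lemma~\ref{lem:closedness} each relevant inclusion is $<_\alpha$ (hence $\le_\alpha$), so FAP applies. One should also check, in the case of $<_\alpha$, that the standing assumption $\alpha<1$ holds, which it does since $\varepsilon\le 1/2$.

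Next, $M_n$ embeds into $G$. Since $\emptyset \le G$ and, by Lemma~\ref{lem:closedness}, $\emptyset \le_\alpha M_n$ (indeed $\emptyset<_\alpha M_n$), the genericity condition (iv) applied to $A=\emptyset$ and $B=M_n$ yields an embedding $f: M_n\to G$. Its image is an induced subgraph isomorphic to $M_n$, so $\chi(G)\ge \chi(M_n)=n$, as required.

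The main obstacle is the application of the membership lemma for the Mycielskian. The free amalgams in the construction are taken over sets such as $A\setminus\{v\}$, $A$ and $S$, and the required closedness $\le$ must be verified inside the relevant intermediate graphs, not only inside $A'$. Since those intermediate graphs are induced subgraphs of $M_{k+1}$, their maximum degree is at most $\varDelta(M_{k+1})$, so Lemma~\ref{lem:closedness} still applies to each of them and the bound $\alpha<1/\varDelta(M_{k+1})$ suffices. I would take the stated lemma as given and only note this point.
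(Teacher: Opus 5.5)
Your proposal is correct and is essentially the argument the paper intends. The paper gives no separate proof and presents the proposition as an immediate consequence of the Mycielskian membership lemma, which rests on Lemma~\ref{lem:closedness} and FAP; you make this explicit by iterating Mycielskians from a single edge and taking $\varepsilon$ below $1/\varDelta(M_n)$. Your check that $\emptyset <_\alpha M_n$, so that genericity embeds $M_n$ into $G$, fills in a step the paper leaves implicit. The only fix needed is for $n \le 2$, where your minimum is over an empty set: take, e.g., $\varepsilon = 1/2$ there.
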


\section{Graphs without complex structures}
As stated, our goal is to show that an infinite chromatic number implies the existence of an infinite clique, provided the edge structure is sufficiently simple.
So far, we have established this implication in the following two cases.

\subsection{$U$-rank one graphs}
\begin{theorem}
    Let \( G \) be a graph whose theory has \( U \)-rank one, meaning that every element in the monster model has \( U \)-rank at most 1.  
Assume \( \chi(G) \geq \omega \). Then \( G \) must contain an infinite clique.
\end{theorem}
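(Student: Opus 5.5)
The plan is to argue by contradiction using compactness and the edge-partition principle of Remark~\ref{rem-first}(\ref{edge-set-partition}), in two stages. First I find, in $\M$, an adjacent pair $a,b$ with $a\ind b$ and $\stp(a)=\stp(b)$ (Lascar strong type if the theory is only simple). Then I grow such a pair into an infinite clique, as in Lemma~\ref{lemma-stp}.

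\emph{Stage 1: the key pair exists.} Suppose no such pair exists. Consider the partial type
\[
p(x,y)=\{R(x,y)\}\cup\{E(x,y): E \text{ an $\emptyset$-definable finite equivalence relation}\}\cup\{\neg\varphi(x,y),\neg\varphi(y,x): \varphi \text{ algebraic in } x\}.
\]
Any realization $(a,b)$ of $p$ satisfies $a\notin\acl(b)$ and $b\notin\acl(a)$. Since the $U$-rank is at most one, this gives $a\ind b$, so such a realization would be exactly the forbidden pair. The degenerate case of rank~$0$ elements causes no trouble: if $a\in\acl(\emptyset)$, then $\stp(a)=\stp(b)$ forces $a=b$, which is impossible since $R$ is irreflexive. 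Hence $p$ is inconsistent. By compactness there are a single $\emptyset$-definable finite equivalence relation $E$ with $m$ classes and finitely many formulas $\varphi_i(x,y)$, each with at most $k$ solutions in $x$ for any fixed $y$, such that
\[
R(x,y)\rightarrow \neg E(x,y)\vee\bigvee_i\bigl(\varphi_i(x,y)\vee\varphi_i(y,x)\bigr).
\]
This partitions the edge set of $G$ into two symmetric parts.
\begin{itemize}
\item $C_0$ consists of the edges between distinct $E$-classes. Each $E$-class is $C_0$-free, so $\chi(G,C_0)\le m$.
\item $C_1$ consists of the remaining edges, each covered by some $\varphi_i$ in one direction. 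Orient each such edge $\{u,v\}$ as $u\to v$ when $\varphi_i(u,v)$ holds for some $i$. Then every vertex has in-degree at most $d:=k\cdot(\text{number of }\varphi_i)$.
\end{itemize}
A finite graph with such an orientation has at most $d|F|$ edges, so it has a vertex of total degree at most $2d$. The greedy induction from the proof of Proposition~\ref{prop:upperbound} then gives $\chi\le 2d+1$ on finite subgraphs, and Remark~\ref{rem-first}, 2(i) (De Bruijn--Erd\H{o}s) lifts this to $\chi(G,C_1)\le 2d+1$. By Remark~\ref{rem-first}(\ref{edge-set-partition}), $\chi(G)$ is then finite, contradicting $\chi(G)\ge\omega$.

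\emph{Stage 2: from the pair to a clique.} If $\Th(G)$ is stable, I simply invoke Lemma~\ref{lemma-stp}. The stationarity of $\stp(a)$ makes $\tp(a,b)$ the unique independent pair type, so every pair in a Morley sequence of $\stp(a)$ realizes it, and the sequence is an infinite clique.

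In the merely simple (supersimple, rank one) case, I build the clique $a_0,a_1,\dots$ inductively as an independent sequence of one Lascar strong type $q$ in which all pairs are adjacent. At step $n$ I need an element realizing $q$, independent from $a_0\dots a_{n-1}$, and satisfying the translates $\tp(b/a)$ moved onto each $a_i$. The generalized independence theorem over $\acl^{eq}(\emptyset)$ amalgamates these $n$ types, which is legitimate because the $a_i$ are independent of equal Lascar strong type.

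This amalgamation is the step I expect to be the main obstacle: it must be checked that the independence theorem applies to $n$ parameters simultaneously (by induction on $n$) and that Lascar strong type, rather than strong type, is the right invariant. If needed I would pass to types over a small model, where the two coincide.

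Finally, Ramsey and compactness give an infinite clique in $\M$, hence arbitrarily large finite cliques in $G$. To get an infinite clique inside the countable $G$ itself, I would use $\omega$-saturation of $G$ or read the statement up to elementary equivalence, which I would make explicit.
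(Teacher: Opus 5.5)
Your proposal is correct and is essentially the paper's proof: the paper also shows that a realization of ``$\stp(x)=\stp(y)$, $R(x,y)$, $x\notin\acl(y)$'' would give an infinite clique via Lemma~\ref{lemma-stp}, using $U$-rank one to get independence. Otherwise compactness yields a finite equivalence relation $E$ and uniformly algebraic $\varphi_i$ that bound degrees inside each $E$-class, contradicting $\chi(G)\ge\omega$; keeping only the one-sided condition $x\notin\acl(y)$ gives that degree bound directly by symmetry of $R$, so your orientation/degeneracy step is harmless but unnecessary. In the intended stable setting, which Lemma~\ref{lemma-stp} requires, the lemma already builds the clique inside $G$ because $\tp(a/Gb)$ is finitely satisfiable in the model $G$, so your closing appeal to $\omega$-saturation of $G$ (which a countable $G$ need not have) or to elementary equivalence is not needed, and neither is the speculative simple-theory extension.
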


\begin{proof}
By assuming that $G$ does not contain an infinite clique, we derive a contradiction. 
    Let $\Gamma(x,y)$ be the following set of formulas:
    \[
  \{(\exists^{< n} u) \varphi(u,y) \to \neg \varphi(x,y): n \in \omega, \ \varphi(x,y) \text{ a formula}\}.
    \]
\begin{claim}
    $\Delta(x,y):=$` $\stp(x)=\stp(y)$' $ \cup \, \{R(x,y)\} \cup \Gamma(x,y) $ is inconsistent. 
\end{claim}
Suppose, for a contradiction, that $\Delta(x,y)$ is consistent, and let
$(a, b)$ be a realization.
Since $R$ is irreflexive, the condition $\stp(x) = \stp(y)$ implies that both $a$ and $b$ are non-algebraic.
Hence, $U(a) = U(b) = 1$.
Since $(a, b)$ satisfies $\Gamma$, we must have $a \ind b$. 
By Lemma \ref{lemma-stp}, this implies the existence of an infinite clique. Hence, $\Delta$ must be inconsistent. 
(End of Proof of Claim A)
\medbreak
Since \( \Delta \) is inconsistent, there exist a finite equivalence relation \( E(x, y) \), an integer \( n \in \omega \), and finitely many formulas \( \varphi_i(x, y) \) for \( i < k \), such that the following sentence holds:
\[
(\forall x \forall y )\left[ E(x, y) \wedge R(x, y) \rightarrow \bigvee_{i < k} \left( (\exists^{< n} u)\, \varphi_i(u, y) \wedge \varphi_i(x, y) \right) \right].
\]
By Remark \ref{rem-first}, since there are only finitely many $E$-classes, one of them—say $C$—must have infinite chromatic number. 
However, by the sentence above, every element \( a \in C \) has finite degree; in fact, we have \( \deg(a) \leq kn \).  
But the uniform finiteness of degrees implies that \( \chi(C) \) is finite—a contradiction.
\end{proof}

\begin{example}
Let \(G = \bigoplus_{n \in \omega} K_n\), the disjoint union of finite complete graphs.  
Then \(\chi(G)\) is infinite, although \(G\) contains no infinite clique.  
In the monster model, however, an infinite clique does appear.  
Note also that the theory has \(U\)-rank \(2\), since the relation
\(
E(x,y) := R(x,y) \vee x=y
\)
is an equivalence relation with infinitely many classes.
\end{example}
\subsection{o-minimal graphs} 

\input{o-minimal-graph}

\end{document}

%% file: introduction.tex
Let \(G = (G, R)\) be a graph, where \(R \subset G^2\) is a symmetric and irreflexive relation representing the edges of \(G\).  
For a cardinal \(\kappa\), a (vertex) coloring of \(G\) with \(\kappa\) colors is a function \(f : G \to \kappa\) such that  
\(f(u) \neq f(v)\) for every edge \((u,v) \in R\).  
The chromatic number \(\chi(G)\) is the least cardinal \(\kappa\) for which such a coloring exists.

If $G$ has a complete induced subgraph of size $\kappa$, then clearly $\chi(G) \geq \kappa$. However, the converse is not true in general.
In \cite{HKS}, Halevi, Kaplan and Shelah studied uncountable graphs with stability and showed that under additional conditions, the converse holds for these graphs. They also studied uncountable graphs with simple theories and demonstrated a weak converse.

In this paper, we investigate the chromatic number of countable graphs, focusing on those with relatively simple structure—such as those defined by Fraïssé construction and related model-theoretic techniques.
Our aim is to identify criteria specific to these classes of graphs that determine whether the chromatic number $\chi(G)$ is finite or infinite.
We also aim to show that for graphs with relatively simple structure,
if such a graph $G$ has an infinite chromatic number then it must contain arbitrarily large finite complete subgraphs.

In Section 2, we review basic definitions and facts concerning chromatic numbers.  
These facts are accompanied by brief proofs.  
In what follows, we outline the main contributions of this paper.
In Section 3, we examine graphs constructed as Fraïssé limits.
The following are simple but important observations.
\begin{itemize}
\item Let $G$ be the Fraïssé limit of a (non-trivial) class $K$ of finite graphs with the free amalgamation property.
Then $\Th(G)$ has the independence property, and is therefore unstable. \end{itemize}

In relation to this remark, we have the following theorem: 

\begin{thmA}
    Let $G$ be a countable homogeneous graph. If $G$ is unstable, then $\chi(G)$ is infinite. 
\end{thmA}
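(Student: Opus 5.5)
The plan is to realize $G$ as the Fra\"{\i}ss\'e limit of its age $K$, so that $\Th(G)$ is $\omega$-categorical with quantifier elimination. Then the argument is a reduction to the configuration behind Theorem~\ref{unstable,omega}. We assume $\chi(G)=n<\omega$ and aim for a contradiction.

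First, since $\Th(G)$ is unstable and has QE, Remark~\ref{rem-order-prop}(3) shows that the edge formula $R(x,y)$ itself has the order property. By compactness this gives sequences $\{a_q\}_{q\in\Q}$ and $\{b_r\}_{r\in\Q}$ with $R(a_q,b_r)$ iff $q<r$. By $\omega$-categoricity (all finite pieces of this configuration are realized in $G$, and a countable saturated model is isomorphic to $G$) we may take these sequences inside $G$. By Ramsey's theorem we may also take them indiscernible, and since $\chi(G)<\omega$ neither sequence can be a clique, so both are $R$-free.

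Next I extract a combinatorial extension property. By QE, the type of a finite tuple is determined by its isomorphism type, so homogeneity lets us transfer the following configuration to any infinite $R$-free set $J=J_0\cup J_1$: take $I=\{a_q\}_{q\in(0,1)}$, $J_0=\{b_r\}_{r<0}$ and $J_1=\{b_r\}_{r>1}$. This gives an infinite $R$-free set $I$ with $I\cup J_0$ $R$-free and $I$ completely joined to $J_1$. I expect this transfer to be the main obstacle. Each finite instance is handled by homogeneity plus the extension property of the Fra\"{\i}ss\'e limit, and an infinite $I$ is then obtained by building it one point at a time. One must check that each step only requires a finite quantifier-free condition over a finite part of $J$; I would handle the infinitely many constraints by a compactness/back-and-forth argument inside the countable saturated model $G$.

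Finally, fix a coloring with classes $P_0,\dots,P_{n-1}$. I will choose a maximal $F\subseteq n$ admitting infinite $A_i\subseteq P_i$ $(i\in F)$ with $\bigcup_{i\in F}A_i$ $R$-free. Splitting each $A_i$ into two infinite halves and applying the extension property yields an infinite $I$ disjoint from $\bigcup_{i\in F}P_i$, because each point of $I$ is adjacent to infinitely many points of each such class. Some infinite part of $I$ then lies in a single $P_k$ with $k\notin F$, contradicting maximality unless $F=n$. Once $F=n$, the set $A=\bigcup_{i<n}A_i$ is $R$-free. Applying the extension property with $J_0=\emptyset$ and $J_1=A$ gives a vertex adjacent to all of $A$, and this vertex has no available color, which is the desired contradiction.
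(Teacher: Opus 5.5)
Your outline is the paper's proof step for step: $R$ has the order property via QE and Remark~\ref{rem-order-prop}; the $\Q$-indexed configuration is moved into $G$ and made $R$-free using $\chi(G)<\omega$; then come the extension property (the paper's Claim A), the maximal-$F$ argument (Claim B), and a common neighbour of $A=\bigcup_{i<n}A_i$.

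\textbf{The gap.} The problem is the step you flagged yourself, and the way you propose to get around it does not work. You require a single vertex to be adjacent to every element of an infinite $J_1$ and non-adjacent to every element of an infinite $J_0$. That is a type over an infinite set, not a finite quantifier-free condition over a finite part of $J$. The countable model $G$ is saturated only over finite sets, so no compactness or back-and-forth argument inside $G$ will produce such a vertex. Homogeneity alone cannot give it either. In the random graph, enumerate $G=\{v_0,v_1,\dots\}$ and choose $j_k$ distinct from and non-adjacent to $v_k,j_0,\dots,j_{k-1}$. Then $J=\{j_k\}_{k\in\omega}$ is an infinite $R$-free set with no common neighbour in $G$. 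So, inside $G$, neither the extension property for infinite $J$ nor your last step (a vertex adjacent to all of the infinite set $A$) follows. The paper's written proof passes over the same point: it asserts Claim A ``using elimination of quantifiers'' and concludes ``there is $a$ adjacent to every element in $A$'' while working with a colouring of $G$.

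\textbf{The repair.} By Remark~\ref{rem-first}, 2(ii), every $H\equiv G$ has $\chi(H)=n$. So run the whole argument in an $\aleph_1$-saturated $H\succ G$, with a colouring $P_0,\dots,P_{n-1}$ of $H$.
\begin{itemize}
\item Take a countable $R$-free $J=J_0\sqcup J_1\subseteq H$. Consider the type in variables $\{x_i\}_{i<\omega}$ over $J$ saying that the $x_i$ are distinct, lie outside $J$, are pairwise non-adjacent, are non-adjacent to $J_0$, and are adjacent to $J_1$.
\item This type is finitely satisfiable. By QE, each finite $R$-free $J_0'\cup J_1'$ has the same type as some $\{b_r\}_{r\in S_0}\cup\{b_r\}_{r\in S_1}$ with $S_0\subseteq(-\infty,0)$ and $S_1\subseteq(1,\infty)$ finite, and the $a_q$ with $q\in(0,1)$ witness the required vertices.
\item $\aleph_1$-saturation realizes this type, and likewise the common neighbour of the countable set $A$.
\item Claim B and the final contradiction then go through verbatim with countable $A_i$.
\end{itemize}

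Alternatively, you can stay in $G$ and make everything finitary:
\begin{itemize}
\item Use the extension property only for finite $J$, which homogeneity does give.
\item Call $F$ good if there are witnesses $A_i\subseteq P_i$ $(i\in F)$ of every finite size.
\item Run the maximality argument with pigeonhole over the finitely many $k\notin F$.
\item At the end, take one vertex from each class together with a common neighbour of these $n$ vertices; that neighbour has no available colour.
\end{itemize}
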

Combining this result with the preceding observation, we deduce that the triangle‑free random graph has infinite chromatic number, even though it contains no clique of size greater than $3$.
Concerning the case with finite chromatic number, we have the following:

\begin{thmA} Let $G$ be the Fraïssé limit of an infinite class of finite graphs.
If $\chi(G) = n \in \omega$ then $G$ is isomorphic either to the free amalgam of infinitely many copies of the complete graph $K_n$ with $n$ vertices
or to $K_\omega^n$, the complete $n$-partite graph in which every partition class has exactly $\omega$ vertices.
\end{thmA}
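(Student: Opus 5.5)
The plan is to reduce to the stable case and then split according to whether vertex degrees are finite or infinite. Since $G$ is a Fra\"{i}ss\'e limit, it is homogeneous, $\omega$-categorical and has quantifier elimination. If $\Th(G)$ were unstable, Theorem~\ref{unstable,omega} would give $\chi(G)=\omega$, contrary to $\chi(G)=n$. So $T=\Th(G)$ is stable. By homogeneity all vertices realize the same (quantifier-free, hence complete) $1$-type. In particular either every vertex has the same finite degree $m$, or every vertex has infinite degree. I treat these two cases separately.

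\emph{Finite degree.} By $\omega$-categoricity, $\acl(a)$ is finite. The connected component $C(a)$ is contained in $\acl(a)$, since a finite-degree graph makes each $k$-neighbourhood finite and definable; hence $C(a)$ is finite. I will show $C(a)$ is complete by homogeneity. Suppose $d_0,d_1\in C(a)$ are distinct and non-adjacent. Since $G$ is infinite with finite components, there are $e_0,e_1$ in different components. Then $(d_0,d_1)$ and $(e_0,e_1)$ have the same quantifier-free type, yet no automorphism maps one pair to the other. So every component is a clique. All components are isomorphic by homogeneity, and $\chi(G)=n$ forces each to be $K_n$. Hence $G$ is the free amalgam of infinitely many copies of $K_n$.

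\emph{Infinite degree.} Fix $a$ and infinitely many neighbours $b_i$ of $a$. By Ramsey's theorem, and since $\chi(G)$ finite excludes infinite cliques, I may take $\{b_i\}$ to be an $R$-free indiscernible sequence. A Morley sequence over $\emptyset$ exists and is $R$-free, again because there are no infinite cliques. By quantifier elimination it has the same type as $\{b_i\}$, so $\{b_i\}$ is Morley over $\emptyset$. Applying the same reasoning to the sequence $a,b_0,b_1,\dots$ (suitably re-indexed) should give $a\ind b_0$ with $R(a,b_0)$. If $\stp(a)=\stp(b_0)$, Lemma~\ref{lemma-stp} produces an infinite clique, which is impossible. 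Hence some $\emptyset$-definable finite equivalence relation $E$ separates $a$ from $b_0$.

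By quantifier elimination and the fact that there is a single $1$-type, $E(x,y)$ is a disjunction of the three atomic $2$-types: $x=y$, $R(x,y)$, and $x\neq y\wedge\neg R(x,y)$. Reflexivity forces $x=y$ to appear, and non-triviality excludes taking all three. The option $x=y$ alone has infinitely many classes. The option $x=y\vee R$ is ruled out: transitivity through $a$ would make the $b_i$ pairwise adjacent. This leaves $E=\neg R$. Its classes are $R$-free, and distinct classes are completely joined. The number of classes is then exactly $\chi(G)=n$, and each class is infinite, since all vertices have infinite degree and $G$ is homogeneous. Therefore $G\cong K^n_\omega$.

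The main obstacle is the independence step: justifying that the adjacent pair $a,b_0$ is independent. This needs care in arguing, via QE and homogeneity, that the indiscernible $R$-free sequence really is Morley, so that $a$ can be placed as a member independent from $b_0$. I would first try a symmetry argument: realize the type of a Morley pair inside an indiscernible sequence and compare its quantifier-free type with that of $(a,b_0)$.
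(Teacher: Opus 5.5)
Your outline follows the paper's proof step for step. You get stability from Theorem~\ref{unstable,omega}. In the finite-degree case you use finiteness of $\acl(a)$ and completeness of components by homogeneity. In the infinite-degree case you take an $R$-free indiscernible sequence of neighbours, identify it with a Morley sequence by QE, and finish with Lemma~\ref{lemma-stp} and the QE analysis of $E$, which you do more carefully than the paper, e.g.\ by discarding $E=(x=y)$.

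\textbf{The gap.} It is exactly the step you flag, $a\ind b_0$, and neither of your suggested fixes works. The sequence $a,b_0,b_1,\dots$ cannot be indiscernible in any ordering, since it contains both the edge $ab_0$ and the non-edge $b_0b_1$. So QE gives no identification with a Morley sequence. The Morley pairs you can find inside an $R$-free sequence are non-adjacent, so comparing quantifier-free types says nothing about the adjacent pair $(a,b_0)$. Any argument must actually use the $b_i$ in a forking computation. In $\omega K_n$, which meets all your other standing hypotheses, $R(x,a)$ is algebraic and no edge is an independent pair. (The paper itself passes from ``$B$ is Morley'' to ``$a\ind b_0$'' in a single sentence, so what is missing is a standard stability argument.)

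\textbf{One way to close it.} Since $T$ is stable, forking equals dividing, so it suffices that $R(x,a)$ does not divide over $\emptyset$. Suppose it did: there is an indiscernible sequence $(a_i)_{i<\omega}$ with $a_0=a$ and $\{R(x,a_i):i<\omega\}$ inconsistent.
\begin{enumerate}
\item The $a_i$ are distinct, since a constant sequence is impossible when $a$ has neighbours.
\item They are pairwise non-adjacent, since $\M$ omits $K_{n+1}$.
\item So by QE, $(a_i)_{i<\omega}$ has the same type as $(b_i)_{i<\omega}$. But $a$ realizes $\{R(x,b_i):i<\omega\}$, a contradiction.
\end{enumerate}
Hence some $b$ satisfies $R(a,b)$ and $b\ind a$. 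You can then run Lemma~\ref{lemma-stp} and the analysis of $E$ with this $b$ in place of $b_0$; the $b_i$ are still available to exclude $x=y\vee R(x,y)$.

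\textbf{Alternative, closer to the paper's wording.} Take $B$ indiscernible over $a$ and of length $\omega_1$. Local character gives a countable $B_0\subset B$ with $a\ind_{B_0}B$. For $b\in B\setminus B_0$, $b\ind B_0$ (as $B$ is Morley) together with $b\ind_{B_0}a$ yields $b\ind a$. Indiscernibility over $a$ then transfers this to $b_0$.
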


In Section 4, we study graphs constructed using Hrushovski method.  
Although Hrushovski construction is analogous in spirit to the Fraïssé limit,  
it introduces additional complexity through the use of 
predimension functions. 
Nevertheless, the number of chromatic configurations that can arise in such structures is finite.  
We establish the following result:
\begin{thmA}
Let $G$ be a generic structure (a binary graph) obtained by 
Hrushovski construction using a predimension function. 
Then $\chi(G)$ is finite.
Depending on the coefficient of the predimension function,
$\chi(G)$ can be arbitrarily large.
\end{thmA}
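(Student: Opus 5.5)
The statement has two halves, and I would treat them separately. Both reduce to results already established in Section~4. Throughout I take ``generic structure obtained by Hrushovski construction using a predimension function'' to mean the generic structure $G$ of an amalgamation class $(K,\leq)$ with $K\subseteq K_\alpha$, where $0<\alpha\le 1$ and $\leq$ is $\leq_\alpha$ or $<_\alpha$. The case $\alpha=0$ is excluded: there $G$ is just a Fra\"{\i}ss\'e limit, and Proposition~\ref{Kikyo} shows $\chi(G)$ can be infinite. For the arbitrarily large half I would additionally assume FAP, as in Proposition~\ref{prop:lowerbound}.

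\emph{Finiteness.} Given $\alpha>0$, choose a natural number $k^*$ with $\alpha k^*>2$, for instance $k^*=\lfloor 2/\alpha\rfloor+1$. Proposition~\ref{prop:upperbound} then gives $\chi(G)\le k^*<\omega$. For the record, that argument runs as follows. Every $A\in K$ has a vertex of degree $<k^*$, since otherwise $\delta_\alpha(A)\le |A|-\alpha k^*|A|/2<0$. Induction on $|A|$ then $k^*$-colors every finite subgraph of $G$. The De Bruijn--Erd\H{o}s compactness of Remark~\ref{rem-first}(2)(i) transfers this bound to $G$ itself. Nothing new is needed here beyond the explicit choice of $k^*$.

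\emph{Arbitrarily large values.} Fix $n>0$. I would prove Proposition~\ref{prop:lowerbound} by iterating Mycielskians. Since $K$ is non-trivial and hereditary, the single edge $M_2=K_2$ lies in $K$. Let $M_{j+1}=(M_j)_M$; by the Fact, $\chi(M_j)=j$. Each $M_j$ is an induced subgraph of $M_{j+1}$, so degrees only grow along the chain, and $\varDelta(M_j)\le\varDelta(M_n)$ for $j\le n$. Put $\varepsilon=1/\varDelta(M_n)$ and suppose $\alpha<\varepsilon$. Applying the preceding lemma (FAP together with Lemma~\ref{lem:closedness}) successively for $j=2,\dots,n-1$ puts every $M_j$ in $K$, and in particular $M_n\in K$.

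Next I would embed $M_n$ into $G$ using clause (iv) of genericity with base $A=\emptyset$. This requires $\emptyset\leq M_n$. For $\leq_\alpha$ this is immediate from $M_n\in K_\alpha$. For $<_\alpha$ it follows from Lemma~\ref{lem:closedness}, since $\alpha<1/\varDelta(M_n)$. We also need $\emptyset\le G$; this holds because every finite substructure of $G$ is in $K\subseteq K_\alpha$. Hence $\chi(G)\ge\chi(M_n)=n$. For such $\alpha$, the first half simultaneously shows $\chi(G)$ is finite, so the chromatic number is finite but at least $n$. Since $n$ was arbitrary, this is the claimed unboundedness.

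\emph{Main obstacle.} I expect the delicate point to be this embedding step rather than the combinatorics. One must ensure that $M_n$ is not merely in $K$ but is strongly ($\leq$-)closed over the empty set, so that genericity applies. Equally, the Mycielskian at each stage must be produced by free amalgams over strong substructures, so that FAP applies. Both requirements are exactly what the hypothesis $\alpha<1/\varDelta$ secures through Lemma~\ref{lem:closedness}. That is why the threshold $\varepsilon$ must be taken from the final graph $M_n$ and not from the intermediate graphs $M_j$.
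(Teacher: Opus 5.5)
Your overall route is the paper's. Finiteness is Proposition~\ref{prop:upperbound} with $\alpha k^*>2$. Unboundedness is Proposition~\ref{prop:lowerbound}, which the paper gets exactly as you do: iterate Mycielskians from $K_2$, and keep them in $K$ via FAP and Lemma~\ref{lem:closedness} once $\alpha<1/\varDelta(M_n)$. The paper leaves the final embedding step implicit. Your degree bookkeeping and your check of $\emptyset\le M_n$ are fine. The step that does not hold as written is your justification of $\emptyset\le G$ in the $<_\alpha$ case. Membership of finite substructures in $K\subseteq K_\alpha$ only gives $\delta_\alpha(C)\ge 0$. But $\emptyset<_\alpha G$ requires $\delta_\alpha(C)>0$ for every nonempty finite $C\subseteq G$. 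For rational $\alpha$ these differ: for $\alpha=1/m$, the complete bipartite graph $F=K_{2m,2m}$ lies in $K_{1/m}$ with $\delta_{1/m}(F)=0$.

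This can actually break the conclusion. Run the usual bookkeeping construction of a rich strong chain starting from $F$ instead of $\emptyset$. The resulting limit also satisfies clauses (i)--(iv) as literally stated. Indeed, by submodularity, every finite $A<_\alpha$ of this limit must contain $F$, so only extensions over bases containing $F$ are required. Now let $K$ be the class of all $X\in K_{1/m}$ such that $X\setminus F'$ is edgeless for every induced copy $F'$ of $F$ in $X$. This class is hereditary and has FAP for $<_{1/m}$, because in a free amalgam over a strong base every copy of $F$ lies in the base. For this $K$, the limit built from $F$ is $F$ plus an independent set, so its chromatic number is at most $3$ for every $m$.

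So you need an extra input at this point. One option is to restrict the unboundedness argument to $\le_\alpha$ or to irrational $\alpha$; that already suffices for the existential claim that $\chi(G)$ can be arbitrarily large. The other is to adopt the standard convention that the generic is the limit of a strong chain starting from $\emptyset$. That convention is implicitly behind the paper's assertion that the generic is unique. Under either option, clause (iv) over $\emptyset$ applies as you intended.
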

\medbreak
The following theorem shows that if $G$ has an infinite chromatic number and lacks structural complexity,
then the monster model contains an infinite clique. 

\begin{thmA}
    Let  $G$ be a countable graph with an infinite chromatic number. 
    If the structure of $G$ is rather simple in the following sense, then $G$ has arbitrarily large finite complete subgraphs. 
    \begin{enumerate}
        \item $G$ is stable of $U$-rank one. (In this case, $G$ has an infinite clique.)
        \item $G$ is definable in an o-minimal structure.
    \end{enumerate}
\end{thmA}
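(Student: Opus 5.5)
Case~1 of the statement is exactly the $U$-rank one Theorem of Section~4.1, so nothing remains to prove there: a stable graph of $U$-rank one with $\chi(G)\ge\omega$ contains an infinite clique, and in particular arbitrarily large finite ones. All the work is in Case~2.

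\textbf{Setting for Case~2.} I read ``definable in an o-minimal structure'' as follows: $G$ is a countable o-minimal structure $(M,<,\dots)$ and $R\subseteq M^2$ is a definable symmetric irreflexive relation, so the vertex set is one-dimensional. I expect the paper's setting to be one-dimensional for a concrete reason. If vertices may be $k$-tuples, the shift graph is a counterexample: its vertices are the pairs $a<b$ in $\Q$, with $(a,b)$ adjacent to $(b,c)$. It is definable in $(\Q,<)$, it is triangle-free, and its chromatic number is infinite (on $n$ points it is $\lceil\log_2 n\rceil$). So the plan assumes the vertex set lies in $M$.

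\textbf{Reduction to a single cell.} Take a cell decomposition of $M^2$ compatible with $R$ and with the diagonal. Its projection partitions $M$ into finitely many points and open intervals $I_0,\dots,I_{r}$.
\begin{itemize}
\item By Remark~\ref{rem-first}, 3, some interval $I$ has $\chi(I,R)=\omega$.
\item On $I\times I$, the edge set $R$ is a finite union of cells. These are either graphs of definable continuous functions $y=f(x)$ or open $2$-cells. Symmetrizing each piece gives a finite partition into symmetric sets, so Remark~\ref{rem-first}, \ref{edge-set-partition} yields one piece with infinite chromatic number.
\item A piece coming from a function graph $y=f(x)$ cannot be that piece. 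Orienting each edge $x\to f(x)$ gives out-degree at most one. Any finite subgraph is then $1$-degenerate in the oriented sense, so it has at most as many edges as vertices. By the De Bruijn--Erd\H{o}s argument of Remark~\ref{rem-first}, 2, such a graph has $\chi\le 3$.
\end{itemize}
Hence there is an open $2$-cell $D\subseteq R\cap(I\times I)$, and I may take $D$ symmetric after shrinking, such that $\chi(I,D)=\omega$.

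\textbf{Main obstacle: extracting cliques from $D$.} Write $D$ (or its part above the diagonal) as $\{(x,y): x\in J,\ f(x)<y<g(x)\}$, where $f<g$ are continuous definable functions on a subinterval $J$.

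The first thing I would try is to show that $\chi(I,D)=\omega$ forces $D$ to be \emph{tangent to the diagonal}. Concretely, the aim is a point $c$ and $\varepsilon>0$ with
\[
\{(x,y): c<x<y<c+\varepsilon\}\subseteq D
\]
(or the mirror image at a left endpoint). Once this holds, any $m$ points of the infinite set $G\cap(c,c+\varepsilon)$ form a $K_m$, which is exactly what the statement requires.

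To prove the tangency, I would argue by contraposition using monotonicity. After finitely many further splits of $J$, each of $f$ and $g$ is constant or strictly monotone, and each satisfies either $f(x)>x$ throughout or $f(x)=x$ identically. Then I separate two cases.
\begin{itemize}
\item If $f(x)>x$ on the relevant region and $f$ stays bounded away from the identity, I would try to build a finite proper colouring directly. The idea is to colour by which block of a definable ``$f$-orbit'' partition $x\mapsto f(x)$ a vertex falls into, so that every $D$-edge jumps at least one block; alternating parity of blocks would then bound $\chi$.
\item If the gap $f(x)-x$ shrinks to zero at an endpoint $c$, o-minimality (the monotonicity theorem applied to $f$ and $g$ near $c$) should give the triangle-shaped region above.
\end{itemize}

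The part I am least sure of is the colouring in the first case, namely bounding $\chi$ by a constant when $D$ stays away from the diagonal. If the orbit-block argument fails, my fallback is to replace it by a direct compactness argument. I would pass to an $\aleph_1$-saturated elementary extension, where $\chi$ is preserved by Remark~\ref{rem-first}, 2(ii), and locate the tangency point $c$ as a limit of short edges in $D$.
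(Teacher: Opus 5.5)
Your Case~1 and your reduction to a single open cell are fine; your observation that a symmetrized function-graph piece has $\chi\le 3$ even fills in a point the paper passes over. The gap is in the main step. The target you set, that $\chi(I,D)=\omega$ forces $D$ to be tangent to the diagonal, is false, and both of your bullets fail.

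\textbf{First bullet.} In $M=(\mathbb{Q},<,+,1)$ let $R$ be $|x-y|>1$. Above the diagonal $R$ is the single cell $x+1<y$, so $f(x)=x+1$ stays at distance $1$ from the identity. Yet $\{0,2,4,\dots\}$ is an infinite clique, so no finite colouring exists. Also no triangle $\{c<x<y<c+\varepsilon\}$ lies in $R$.

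\textbf{Second bullet.} On $G=(0,1)$ take the symmetrization of $D=\{(x,y):0<x,\ \tfrac32x<y<1\}$. The gap $f(x)-x=x/2$ tends to $0$ at $c=0$. But $D$ contains no pair with $x<y<\tfrac32x$, so again there is no triangle. Meanwhile $\delta,2\delta,4\delta,\dots$ give arbitrarily large cliques.

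For a single cell above the diagonal, tangency forces $f$ to equal the identity on an interval. That is only the easy case (the paper's Claim~A). Your saturation fallback cannot rescue this: tangency of the definable set $D$ is first-order, so it is unchanged in elementary extensions.

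What is missing is that the dichotomy must involve the far boundary $g$, not only the near boundary $f$. The relevant quantity is how many iterates of $f$ fit between $c$ and $g(c)$. First arrange $f$ and $g$ to be increasing (the paper's Claim~B and the split into $C_+$ and $C_-$). Then every $u\in[c,f^n(c))$ is adjacent to every $v\in(f^{n+1}(c),g(c))$. This gives two cases.
\begin{itemize}
\item If $f^n(c)<g(c)$ for all $n$ at some $c$, then points chosen in the intervals $(f^{2k}(c),f^{2k+1}(c))$ form an infinite clique (the paper's Claim~D). In countable $G$ this yields arbitrarily large finite cliques by elementarity.
\item Otherwise, saturation gives a uniform $N$ with $f^N(c)\ge g(c)$ for all $c$. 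Each step $[c,f(c))$ is edge-free. Edges join only points in the same or consecutive far blocks $[g^m(c),g^{m+1}(c))$, and each far block is covered by at most $N$ steps. Colouring by (step index, parity of block) uses $2N$ colours. This is the content of Claims~C and~E; the $\sim$-classes handle the fact that these blocks need not exhaust the interval.
\end{itemize}
Your parity colouring by $f$-blocks breaks exactly because one edge may jump an unbounded number of $f$-blocks. Bounding that number is the uniform $N$, and when no such bound exists the outcome is large cliques, not tangency.
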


%% file: fraisselimit.tex
Let $K$ be an infinite class of finite graphs closed under graph isomorphisms.
We define some notions as follows:
$K$ has the \emph{hereditary property} (HP) if,
whenever $A \in K$ and $B$ is a substructure of $A$ then $B \in K$.
$K$ has the \emph{amalgamation property} (AP) if
whenever $A, B, C \in K$, $A$ is a substructure of both $B$ and $C$
then there is a graph $D \in K$ such that there are graph embeddings
$f: B \to D$ and $g: C \to D$ satisfying $f(x) = g(x)$ for any $x \in A$.
$K$ has the \emph{joint embedding property} (JEP) if
whenever $A, B \in K$ 
 there is a graph $C \in K$ such that there are graph embeddings
$f: A \to C$ and $g: B \to C$.
A graph $M$ is the \emph{Fra\"{\i}ss\'e limit} of $K$
if (i) $M$ is countable, (ii) any finite substructures of $M$ belong to $K$,
(iii) for any finite substructures $A$ of $M$ and for any structures $B \in K$
with $A \subset B$ there is a graph embedding $f: B \to M$ such that
$f(x) = x$ for any $x \in A$.

It is known that the Fra\"{\i}ss\'e limit of $K$ exists if and only if
$K$ has HP, AP, and JEP. Let $M$ be the Fra\"{\i}ss\'e limit of $K$.
$M$ is \emph{homogeneous} in the following sense:
Any graph isomorphisms between finite substructures of $M$ can be extended to a graph automorphism of $M$. Therefore, the theory of $M$ 
is countably categorical and has QE.

Let $D$ be a graph, and let $A$, $B$, and $C$ be subgraphs of $D$
with $A \subset B$ and $A \subset C$.
$D$ is called a \emph{free amalgam} of $B$ and $C$ over
$A$, written $D = B\oplus_A C$, if $D = B\cup C$ 
and $B \cap C = A$ as vertex sets, and $R^D = R^B \cup R^C$.  A class $K$ has the \emph{free amalgamation property} (FAP) if whenever $D = B \oplus_A C$ with 
$A, B, C \in K$ then $D \in K$.
It is obvious that if $K$ has FAP then $K$ has AP.

In what follows, $K$ denotes an infinite class of finite graphs, and we assume that it has the hereditary property and it is non‑trivial—that is, $ K$ contains at least one graph with an edge.

%% file: strong-type.tex
\begin{lemma} \label{lemma-stp}
    Let \( G \) be an infinite graph whose theory \( T = \operatorname{Th}(G) \) is stable, and let \( A \subseteq G \).  
Suppose there exists a pair $(a, b)$ of elements in the monster model $\M  \succ G$ such that $R(a, b)$ holds, $\stp(a / A) = \stp(b / A)$, and $a \ind_A b$. 
Then \( G \) contains an infinite clique.
\end{lemma}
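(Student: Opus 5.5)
The plan is to build the clique \emph{inside} $G$ greedily, one vertex at a time. Each step realizes in $G$ a formula with parameters from $G$ that belongs to the nonforking extension of $p:=\stp(a/A)=\tp(a/\acl^{eq}(A))$ to $G$. Stability enters only through definability and stationarity of $p$.

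\textbf{Step 1 (the defining formula).} Since $T$ is stable, $p$ is definable over $\acl^{eq}(A)$. Let $\psi(y):=d_p x\,R(x,y)$, a formula with parameters in $\acl^{eq}(A)$. Because $p$ is stationary, for every set $B\supseteq A$ the nonforking extension $p|B$ is the unique extension of $p$ defined by the same scheme; in particular $R(x,c)\in p|B$ whenever $c\in B$ and $\models\psi(c)$. By hypothesis $b\models p$, $a\ind_A b$ and $R(a,b)$, so $R(x,b)\in \tp(a/Ab)=p|Ab$, hence $\models\psi(b)$. Since $\psi$ is over $\acl^{eq}(A)$ and $b$ realizes the complete type $p$, this gives $\psi(x)\in p$.

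\textbf{Step 2 (parameters live in $G$).} The only subtlety is that $\psi$ uses imaginary parameters from $\acl^{eq}(A)$. We have $A\subseteq G\prec\M$, and an element of $\acl^{eq}(A)$ is one of finitely many conjugates over $A$, i.e.\ a class of a finite $A$-definable equivalence relation. Every such class has a real representative in $G$, because $G\prec\M$ and there are only finitely many classes. So $G^{eq}\supseteq\acl^{eq}(A)$, and $\psi$, as well as every formula of $p|G$, is over $G^{eq}$. Elementarity $G^{eq}\prec\M^{eq}$ then shows that any consistent formula over $G^{eq}$ in the home sort is realized by an element of $G$.

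\textbf{Step 3 (induction).} We choose $c_0,c_1,\dots\in G$, each realizing the restriction of $p|G$ to the finitely many relevant formulas. Concretely, suppose $c_0,\dots,c_{n-1}\in G$ have been chosen with $\psi(c_i)$ and $R(c_i,c_j)$ for all $i\ne j<n$. By Step 1, $R(x,c_i)\in p|G$ for each $i<n$. Also $\psi(x)\in p\subseteq p|G$. Hence the formula
\[
\psi(x)\wedge\bigwedge_{i<n}R(x,c_i)
\]
is a single formula over $G^{eq}$ lying in the consistent type $p|G$. By Step 2 it is realized by some $c_n\in G$. Irreflexivity of $R$ forces $c_n\ne c_i$ for all $i<n$. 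Then $\{c_i\}_{i\in\omega}\subseteq G$ is an infinite clique.

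\textbf{Expected obstacle.} The delicate points are the two facts that let the induction take place in the countable model $G$ rather than in $\M$. First, $\psi\in p$: this is exactly where both $a\ind_A b$ and $\stp(a/A)=\stp(b/A)$ are used. Second, imaginary parameters from $\acl^{eq}(A)$ are available in $G$: this is where $A\subseteq G$ is used. With these two facts secured, no saturation of $G$ is needed, since each step only asks $G$ to realize one consistent formula over its own parameters.
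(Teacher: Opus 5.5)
Your proof is correct, but it uses the definability (heir) side of stability, while the paper uses the coheir side.

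\textbf{The paper's route.} The paper first moves $ab$ by an automorphism over $A$ so that $ab\ind G$. Then $\tp(a/G)$ and $\tp(b/G)$ are nonforking extensions of the same strong type, hence equal. Also $a\ind_G b$, so $\tp(a/Gb)$ is finitely satisfiable in $G$. Each new vertex $a_n\in G$ comes from finite satisfiability of $R(x,b)\wedge\bigwedge_{i<n}R(x,a_i)$, and $R(a_n,a)$ is then read off from $\tp(a/G)=\tp(b/G)$. So $a$ and $b$ stay in play throughout.

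\textbf{Your route.} You use $b$ only once, to show that $\psi(y)=d_px\,R(x,y)$ lies in $p$. After that you work entirely inside $G$ with the single formula $\psi$.

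\textbf{Trade-offs.} The paper avoids any bookkeeping with imaginaries: finite satisfiability automatically gives real points of $G$, and the strong-type hypothesis is absorbed into $\tp(a/G)=\tp(b/G)$. The price is the relocation step and the identification of nonforking extensions over a model with coheirs. Your route needs Step~2 ($\acl^{eq}(A)\subseteq\mathrm{dcl}^{eq}(G)$) but no relocation. It also gives a slightly sharper, uniform statement: $\psi(G)$ is nonempty, and every finite subset of $\psi(G)$ has a common neighbour inside $\psi(G)$. Your induction never actually uses pairwise adjacency of the $c_i$ to produce $c_n$.

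The two constructions are dual. After the paper's relocation, $\{c\in G: R(c,a)\}=\{c\in G:R(c,b)\}$ is exactly $\psi(G)$.

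One small precision point: in Step~1 you should write $\tp(a/\acl^{eq}(A)b)=p|Ab$. This uses that $a\ind_A b$ implies $a\ind_{\acl^{eq}(A)}b$.
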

\begin{proof}
As the proof remains unchanged under the assumption $A = \emptyset$, we proceed with this simplification. 
By moving $ab$ by an elementary mapping, we can assume $ab \ind G$. 
Hence, both non-algebraic types $\tp(a/G)$ and $\tp(b/G)$ are non-forking extensions of the strong type $\stp(a)$. 
Thus, we have $\tp(a/G)=\tp(b/G)$; call the type $p$. 
Also, notice the independence relation \(a \ind_G b\), which implies that $\tp(a/Gb)$ is a coheir of $\tp(a/G)$ by stability. 


We inductively choose elements $a_i \in G$ such that $\{a,b, a_0, \dots, a_i\}$ forms a clique.
By $a \ind_G b$ and $R(a,b)$, we have $a_0 \in G$ such that $R(a_0,b)$. 
Since $\tp(a/G)=\tp(b/G)$, we have $R(a_0, a)$. 
This completes the first step. 
Suppose that $a_0,\dots,a_{n-1}$ have been properly selected. 
Since $\{a,b, a_0, \dots, a_{n-1}\}$ is a clique, there is some $a_n \in G$ such that $R(a_n,b)$ and $R(a_n, a_i)$ for any $i<n$ by $a \ind_G b$. 
Also, we have $R(a_n, a)$ by $\tp(a/G)=\tp(b/G)$. 
Since $\{a,b, a_0, \dots, a_{n-1}\}$ forms a clique, it follows that $\{a,b, a_0, \dots, a_n\}$ also forms a clique.  
Hence, the element $a_n$ has been successfully constructed, which completes our proof.
\end{proof}

%% file: Fraisse-infinite-chromatic.tex
\begin{proposition} 
   Suppose \( K \) has FAP.
    Let $G$ be the Fra\"{i}ss\'{e} limit of \( K \).
Then for any infinite cardinal \( \kappa \), there exists a graph \( H \models \Th(G) \) of size \( \kappa \) such that \( \chi(H) \leq \omega \).
\end{proposition}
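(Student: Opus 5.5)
The plan is to build $H$ directly by a transfinite free-amalgamation construction in which every new vertex is joined to only finitely many earlier vertices. Such a well-ordering yields a countable coloring by a greedy argument.

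\textbf{Characterizing models of $\Th(G)$.} Since $G$ is a Fra\"{\i}ss\'e limit, $\Th(G)$ is $\omega$-categorical with QE. It is axiomatized by two families of sentences:
\begin{enumerate}
\item every finite substructure belongs to $K$ (one sentence excluding each isomorphism type outside $K$);
\item the extension axioms: for every finite $A$ and one-point extension $A\cup\{b\}\in K$, every copy of $A$ extends to a copy of $A\cup\{b\}$.
\end{enumerate}
So a graph $H$ satisfies $\Th(G)$ as soon as its age is contained in $K$ and it satisfies all one-point extension axioms. (For $\kappa=\omega$ one could also just argue directly, but the construction below handles all $\kappa$ uniformly.)

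\textbf{Construction.} Build an increasing chain of graphs $H_\beta$ ($\beta<\kappa$), starting from any $H_0\in K$ (for instance a single vertex). At successor step $\beta+1$, a bookkeeping function assigns a pair $(A,A\cup\{b\})$ with $A\subseteq H_\gamma$ finite for some $\gamma\le\beta$ and $A\cup\{b\}\in K$. We set
\[
H_{\beta+1}=H_\beta\oplus_A(A\cup\{b\}),
\]
a free amalgam adding a single vertex $b$ whose neighbours all lie in $A$. At limit stages we take unions. Since at each stage there are at most $\kappa$ such pairs, a standard bookkeeping of length $\kappa$ (e.g.\ indexing stages by $\kappa\times\kappa$) handles every pair that ever appears, and $H=\bigcup_\beta H_\beta$ has size exactly $\kappa$.

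\textbf{Age of $H$.} We show by induction that every finite $F\subseteq H_\beta$ lies in $K$. At a successor step,
\[
F=(F\cap H_\beta)\oplus_{F\cap A}\bigl(F\cap(A\cup\{b\})\bigr).
\]
Both pieces are in $K$, by the induction hypothesis and by HP respectively, so FAP gives $F\in K$. At limits this is immediate, since a finite set is already contained in some earlier stage.

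\textbf{Extension axioms.} These hold because every pair was treated at some stage. Hence $H\models\Th(G)$.

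\textbf{Coloring.} Well-order $H$ by order of insertion, taking $H_0$ first in any order. Each vertex then has only finitely many earlier neighbours: those in the finite set $A$ at its stage, or, for vertices of $H_0$, finitely many anyway. Define $f:H\to\omega$ by transfinite recursion: $f(v)$ is the least natural number not used by the finitely many earlier neighbours of $v$. Every edge joins a later vertex to an earlier one, so the later vertex avoids the earlier one's color. Thus $f$ is a proper coloring and $\chi(H)\le\omega$.

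\textbf{Main obstacle.} I expect the only delicate point to be verifying that the age stays inside $K$ while passing to the infinite structure. The localization of free amalgams to finite subsets above handles this. The bookkeeping itself is routine.
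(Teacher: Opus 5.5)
Your proof is correct, but it reaches $H$ by a different route than the paper.

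\textbf{The paper's route.} The paper works inside the monster model $\M$. It starts from an $R$-free set $H_0$ of size $\kappa$, which fixes the cardinality and is trivially one-colored. In $\omega$ rounds it then uses compactness together with FAP (its Claim A) to realize, all at once, witnesses for every existential formula over $H_n$. These witnesses are placed so that each is adjacent only to its finitely many parameters, and new witnesses are pairwise non-adjacent. Then $H=\bigcup_n H_n\prec\M$ by Tarski--Vaught and QE, and the colorings $f_n$ extend round by round.

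\textbf{Your route.} You build $H$ from outside as a $\kappa$-length chain of one-point free amalgams. You certify $H\equiv G$ through the axiomatization of $\Th(G)$ by ``age $\subseteq K$'' plus the one-point extension axioms. The paper's compactness step is replaced by your localization of FAP to finite subsets, $F=(F\cap H_\beta)\oplus_{F\cap A}(F\cap(A\cup\{b\}))$.

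\textbf{What each buys.} Your approach is more elementary and more transparent:
\begin{itemize}
\item no monster model or compactness is needed;
\item each step adds a vertex with a complete quantifier-free type over a finite set, so you never have to turn an arbitrary quantifier-free $\theta_i$ into a diagram, a point the paper handles loosely;
\item it isolates the real combinatorial content: a well-ordering of $H$ in which every vertex has only finitely many earlier neighbours, from which the greedy coloring is immediate.
\end{itemize}
The price is that you quote the axiomatization rather than prove it. It is standard: every countably infinite model of those axioms is isomorphic to $G$ by back-and-forth, so together with infinitude they are complete by Vaught's test. The paper, by contrast, needs only QE and the Tarski--Vaught criterion.
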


\begin{proof}
We work in the monster model $\M \succ G$. Choose an $R$-free set $H_0 \subset \M$ of size $\kappa$. Such an $H_0$ exists by the free amalgamation property. 
   Let $f_0:H_0 \to \omega$ be the coloring function such that $f_0(a)=0$ for all $a \in H_0$. 
Starting from $H_0$ and $f_0$, we define increasing sequences $\{H_n\}_{n \in \omega}$ of subgraphs of $\M$ and $\{f_n\}_{n \in \omega}$ of colorings such that 
 \begin{itemize}
     \item $|H_n|=\kappa$;
     \item $f_n:H_n \to \omega$ is a valid coloring;
     \item For every existential formula of the form  
\(\exists y \, \theta(a_1,\dots,a_k,y)\), where \(\theta\) is quantifier-free and  
\(a_1,\dots,a_k \in H_n\), if the formula holds in \(\M\), then there exists  
\(b \in H_{n+1}\) such that \(\theta(a_1,\dots,a_k,b)\) holds.
 \end{itemize}
If this construction is successfully carried out, then
\[
   H := \bigcup_{n \in \omega} H_n
\]
is an elementary submodel of \(\M\).  
Indeed, since the theory \(T\) admits quantifier elimination,  
the Tarski–Vaught test reduces to checking quantifier-free formulas,  and by construction \(H\) satisfies this condition.
Furthermore, $f=\bigcup_{n \in \omega}f_n:H \to \omega$ provides a valid coloring of $H$.
So, suppose that $H_n$ and $f_n$ were obtained properly. 
Let $\{\exists y_i \theta_i(\bar a_i, y_i):i<\kappa\}$ be an enumeration of all existential sentences that hold, where $\theta_i$ is quantifier-free, and the parameters $\bar a_i$ are from $H_n$. 
\begin{claim}
 The set $\Gamma(\{ y_i\}_{i<\kappa})$ consisting of the following formulas is consistent:
 \begin{itemize}
     \item $\{\theta_i(\bar a_i, y_i): i<\kappa\}$;
     \item for each $i<\kappa$, $y_i$ and $H_n$ are $R$-free over $\bar a_i$ $(i<\kappa)$;
     \item $\{y_i\}_{i<\kappa}$ is $R$-free over $H_n$.
\end{itemize}
\end{claim}
By compactness, it is sufficient to show that $\Gamma$ restricted to $y_0,\dots,y_m$ is consistent for $m\in\omega$. 
Let $A \subset H_n$ be a finite subset containing all parameters $\bar a_i$ $(i \leq m)$.
By the free amalgamation property,  $\theta_i$ can be viewed as an $R$-diagram over $A \cup \{y_i\}$ such that $y_i$ and $A$ are $R$-free over $\bar a_i$. 
Choose $b_i$ realizing $\theta_i(A,y_i)$ for each $i$. 
Again, by the free amalgamation property, we can assume that the $b_i$ are $R$-free over $A$. 
Thus, we have established the consistency of all finite subsets of $\Gamma$.;....;/;:;./
(End of Proof of Claim A)

\medbreak
By Claim A, we choose $b_i$ for all $i<\kappa$ realizing $\Gamma$.
Let $H_{n+1}=H_n \cup \{b_i:i<\kappa\}$. Clearly $|H_{n+1}|=\kappa$. 
Notice that each $b_i$ is $R$-adjacent to only finitely many elements in $H_n$. 
Let $F_i$ be this finite set, and choose $m_i \in \omega \setminus f_n(F_i)$. 
Then, $f_{n+1}=f_n \cup \{(i,m_i)\}_{i<\kappa}$ is a valid coloring.
\end{proof}

%% file: o-minimal-graph.tex
We use the cell decomposition theorem for o-minimal structures (see \cite{vddTame}). 
To avoid conflict with the notation for open intervals,
the ordered pair of \(x\) and \(y\) will be denoted by \(\pair{x,y}\) in this subsection.
\begin{theorem}
\label{thm:ominimalcase}
Let \( M = (M, <, \dots) \) be an o-minimal structure.  
Let \( G \subset M \) be a definable set, and let \( R \subset G^2 \) be a definable relation representing the edge set of a graph on \( G \).  
Suppose that \( \chi(G) \) is infinite. Then \( G \) contains arbitrarily large finite cliques.  
In particular, in the monster model, \( G \) contains an infinite clique.
\end{theorem}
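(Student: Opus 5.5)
The proof reduces everything to the local structure of $R$ near the diagonal of $G^2$, using cell decomposition and the two partition facts of Remark~\ref{rem-first} (items 3 and~\ref{edge-set-partition}). Since $G\subset M$, o-minimality makes $G$ a finite union of points and open intervals. A finite set of vertices can be given its own colours, so by Remark~\ref{rem-first}(3) we may assume $G$ is a single open interval $I$ with $\chi(I)=\omega$. Next I take a cell decomposition of $M^2$ compatible with $I^2$, with $R$, and with the diagonal $\Delta=\{\pair{x,x}:x\in M\}$. Then $R\cap I^2$ is a finite union of cells, and after shrinking $I$ the decomposition can be taken uniform over $I$.

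I would rule out the low-dimensional cells first, using Remark~\ref{rem-first}(\ref{edge-set-partition}) to split $R$ into symmetrized cells $C\cup C^{-1}$.
\begin{itemize}
\item If $C$ is a point or a vertical segment $\{a\}\times J$, then $C\cup C^{-1}$ is a star together with isolated points, so it has chromatic number at most $2$.
\item If $C$ is the graph of a definable function $f$, every vertex has at most one $f$-out-neighbour. So every finite subgraph has at most as many edges as vertices, hence a vertex of degree at most $2$, and greedy colouring gives $\chi\le 3$.
\end{itemize}
Hence some symmetrized $2$-cell $C\subseteq R$ has infinite chromatic number, and by passing to $C\cup C^{-1}$ we may assume the cell lies above $\Delta$.

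Now I consider the $2$-cell of the decomposition lying directly above $\Delta$ over $I$ (after shrinking $I$), namely $D=\{\pair{x,y}: x\in I,\ x<y<g(x)\}$ with $g$ continuous and $g(x)>x$.

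\textbf{Case 1: $D\subseteq R$.} Fix $x\in I$ and choose $\delta>0$ such that $g(y)>x+\delta$ for every $y\in(x,x+\delta)$; this is possible because $g$ is continuous and $g(x)>x$. Then every pair of distinct points of $J=(x,x+\delta)$ lies in $D\cup D^{-1}\subseteq R$. So $J$ is an infinite clique of $G$, and we are done.

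\textbf{Case 2: $D\cap R=\emptyset$.} Then every edge $\pair{x,y}$ with $x<y$ satisfies $y\ge g(x)$. By monotonicity, after a further finite partition of $I$ (Remark~\ref{rem-first}(3) again), $g$ is constant or strictly monotone on $I$; I treat the main subcase where $g$ is increasing. Starting from a point $a$ near the left end of $I$, form the blocks $B_k=[g^k(a),g^{k+1}(a))$. Since $g$ is increasing, for $y<z$ in the same block we have $z<g^{k+1}(a)\le g(y)$, so each $B_k$ is $R$-free.
\begin{itemize}
\item If finitely many blocks exhaust $I$ (up to finitely many boundary pieces, handled by Remark~\ref{rem-first}(3)), we get a finite colouring, contradicting $\chi(I)=\omega$.
\item Otherwise the iterates $g^k(a)$ accumulate, and we have infinitely many blocks. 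I then pass to the $2$-cell $C$ with infinite chromatic number found above, which lies in the region $y\ge g(x)$. Its lower boundary is a definable function $h\ge g$ on some subinterval, and its upper boundary is $\infty$ or a function $h'>h$. I would choose points $x_1<x_2<\dots<x_n$ in blocks spaced at least two apart, so that $x_{j}\ge g^2(x_i)$ for $i<j$. Using monotonicity of $h$ and $h'$, I would show that every pair $\pair{x_i,x_j}$ lands in $C$, which gives cliques of every finite size $n$.
\end{itemize}

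The main obstacle is this last step. One must show that when the block decomposition is infinite, the $2$-cell $C$ really contains all pairs from a sufficiently sparse increasing sequence. The danger is that its upper boundary $h'$ cuts off long-range pairs. What I would try first is to note that if $h'$ were bounded by $g^m$ for some fixed $m$ (that is, $h'(x)\le g^m(x)$ for all $x$), then colouring by block index modulo $m+1$ would give a finite colouring. Hence some subinterval where $h'$ eventually escapes every fixed $g$-iterate must exist, and that subinterval supplies the sparse clique.

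Once arbitrarily large finite cliques exist, the last assertion of Theorem~\ref{thm:ominimalcase} follows by compactness: the type in variables $\{x_i\}_{i<\omega}$ saying that they are distinct elements of $G$ that are pairwise $R$-adjacent is finitely satisfiable, so it is realized in the monster model, giving an infinite clique there.
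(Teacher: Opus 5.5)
Your reductions (to a single interval, to a $2$-cell via Remark~\ref{rem-first}, and Case~1) are fine and parallel the paper. Case~2, however, has a genuine gap at exactly the step you flag.

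You measure distances with $g$, the upper boundary of the edge-free cell $D$. But $\langle x_i,x_j\rangle\in C$ requires $h(x_i)<x_j<h'(x_i)$, and the lower boundary $h\ge g$ of $C$ need not be bounded by any iterate of $g$. So neither the spacing $x_j\ge g^2(x_i)$ nor the fact that $h'$ escapes every $g$-iterate yields cliques in $C$. For instance, in a nonarchimedean real closed field take $I=(1,\infty)$, $g(x)=x+1$ and $C=\{\langle x,y\rangle: 2x<y<3x\}$. At every infinite $c$ we have $3c>g^m(c)$ for all $m$, yet $C\cup C^{-1}$ is triangle-free, since $x_1<x_2<x_3$ would force $3x_1>x_3>2x_2>4x_1$.

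The first bullet is also invalid. Blocks from one base point $a$ cover only $[a,\sup_k g^k(a))$. The piece $(\inf I,a)$ cannot be discarded by Remark~\ref{rem-first}, which only says that \emph{some} piece has infinite chromatic number. On $(0,1)\subset\mathbb{R}$ with $g(x)=x+x^2$ and $R$ the symmetrization of $\{\langle x,y\rangle: y>g(x)\}$, finitely many blocks exhaust $[a,1)$ for every $a$, yet there are arbitrarily large cliques near $0$. Finally, the failure of a uniform bound gives a witness for each $m$ separately; a single point escaping all iterates needs saturation.

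The missing idea is the paper's dichotomy, phrased with the two boundaries of $C$ itself. Iterate the near boundary $h$ (the paper's $g^*$) and compare it with the far boundary $h'$ (the paper's $f^*$), after checking, as the paper does, that both may be taken weakly increasing. The blocks $[h^k(a),h^{k+1}(a))$ are $C$-free.
\begin{itemize}
\item If $h^N\ge h'$ on all of $I$ for some $N$, split $I$ into the convex classes of points reachable from one another by finitely many applications of $h'$. There are no $C$-edges between classes. Each class is $2N$-coloured by cutting it into $\mathbb{Z}$-indexed intervals $[(h')^n(c),(h')^{n+1}(c))$, each a union of at most $N$ $h$-blocks (the paper's relation $\sim$ and Claim~E). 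This contradicts $\chi(I,C\cup C^{-1})=\omega$.
\item Otherwise, for each $n$ some $c$ satisfies $h^n(c)<h'(c)$. Choosing $x_1=c$ and each $x_{j+1}$ just above $h(x_j)$ then gives an $n$-clique in $C$, as in the paper's Claim~D.
\end{itemize}
Replacing $g$ by $h$ and adding this class decomposition is what your Case~2 needs.
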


\begin{proof}
Since o-minimality is preserved under elementary equivalence, we may assume that $M$, and hence $G$, is sufficiently saturated.
 By o-minimality, the definable set $G$ is a finite union of intervals and points. Because $\chi (G)$ is infinite, at least one of these intervals must have an infinite chromatic number.
Thus, without loss of generality, we may assume that \( G = (d, e) \), where $d,e \in M \cup \{\pm \infty\}$.
From now on, we regard $G$ as a directed graph. Accordingly, its edge set is
$D=\{ \pair{x,y} \in R:y<x\}$. We do not change the definition of coloring. Intuitively, the (directed) edges lie below the line \( y = x \).

We decompose $D$ into finitely many cells. By Remark \ref{rem-first}, among these, there must exist a $\pair{1,1}$-cell $C$ such that the chromatic number of \( (G,C ) \) is infinite.
We choose definable functions $f,g:(d_0, e_0) \to [d,e)$ such that 
    \[
    C=(f,g)=\{\pair{x,y} : d_0<x < e_0, f(x)<y<g(x)\},
    \]
    where $f<g$ holds, and each of $f$ and $g$ is either a strictly monotone definable continuous function or a constant function. 
    Notice also that $g(x) \leq x$ holds. 
\begin{claim}\label{claim-a}
    If there exist infinitely many elements $a \in (d_0,\,e_0)$ with $g(a)=a$, then $G$ contains a definable infinite clique.
\end{claim}
Suppose that there exist infinitely many such elements $a$.
Then, by o-minimality, there exists a non-empty interval 
$I \subset (d_0,\,e_0)$ on which $g(x)=x$ holds. 
Let $a \in I$. Then, we have $f(a)<g(a)=a$. Since $f$ is continuous, there exists a subinterval $(a,\,b) \subset I$ on which $f(x)<a$ holds. 
Then, any two elements $x<y$ from $(a, b)$ are adjacent, since $f(y) <a <x < y=g(y)$ holds. 
(End of the Proof of Claim \ref{claim-a})
\medbreak
Owing to Claim A, we may henceforth assume that there are only finitely many elements a with $g(a)=a$. By restricting to a suitable subinterval of $(d_0,\,e_0)$, we may further assume that
\[\tag{$*$}
g(x)<x\quad \mathrm{for\  all\  }x\in (d_0,\,e_0).
\]

\begin{claim}\label{claim-b}
There exists \( c \in (d_0,\, e_0) \) such that \( g(c) > d_0 \), and the function 
\(
g : (d_0,\, e_0) \to [d, e)
\)
is strictly increasing.
\end{claim}
Suppose, for a contradiction, that 
$
g\big(\,(d_0,\,e_0)\,\big) \leq d_0.
$
Then the possible edges in $G$ are of the form $uv$, where 
\[
u \in (d_0,\,e_0) \quad \text{and} \quad v \in (d,\,d_0).
\]
Hence $G$ is bipartite, and therefore $\chi(G) = 2$, a contradiction.  
Now choose $a \in (d_0,\,e_0)$ with $g(a) > d_0$. 
Notice that $g(a) \in (d_0,\,e_0)$. 
By $(*)$, we have 
\[
g(a) < a \quad \text{and} \quad g(g(a)) < g(a).
\]
From this fact, and noting that $g$ is strictly monotone (or constant), it follows that $g$ must be strictly increasing.
(End of the Proof of Claim \ref{claim-b})
\medbreak

Consider $C_+=\{\pair{x,y} \in C: y>d_0\}$ and $C_-=\{\pair{x,y} \in C: y \leq d_0\}$. $(G,C_-)$ cannot have an infinite chromatic number, since it is bipartite.
Hence, $\chi(G,C_+) \geq \omega$. In 
the following, we restrict our attention to the graph $(G,C_+)$.
Put
\[f^*(x)=\max \{ f(x),d_0\} ,\qquad g^*(x)=\max \{ g(x),d_0\} . \]
By stipulating that $f^*(d_0)=g^*(d_0)=d_0$, both $f^*$ and $g^*$ may be regarded as definable functions from $[d_0,e_0)$ into $[d_0,e_0)$.
We can write \[
C_+=(f^*,\,g^*). 
\]
By Claim \ref{claim-b}, $g^*$ is increasing (in the weak sense). 
We see that $f^*$ is also increasing (in the weak sense). 
Indeed, if $f$ is strictly increasing, clearly $f^*$ is increasing. On the other hand, if $f$ is decreasing, by the same reasoning as in the proof of Claim B, $f(\,(d_0,\,e_0)\,) \subset \, (d,\,d_0]$, and hence $f^*$ is the constant function taking the value $d_0$.

\begin{claim}\label{claim-c}
Let $c \in (d_0,\,e_0)$ be arbitrary.
\begin{enumerate}
    \item The subgraph induced on the interval $[g^*(c),\,c)$ contains no edge. 
    \item Let $m,n \in \omega$ with $|n-m| \geq 2$. 
    Then there is no edge directly connecting the intervals 
    \[
    \big[(f^*)^{m+1}(c), (f^*)^{m}(c)\big)
    \quad \text{and} \quad 
    \big[(f^*)^{n+1}(c), (f^*)^{n}(c)\big) \, .
    \]
\end{enumerate}
\end{claim}
1.     This follows directly from the definition of the upper bound function $g^*$ of $C_+$. Let $u \in (g^*(c),\,c)$.  Since $g^*$ is increasing, $g^*(u) \leq g^*(c)$. Any $v < u$ that is adjacent to $u$ must be less than $g^*(u)$. So, $v$ does not belong to $[g^*(c),c)$. 

2.     This follows from the properties of the lower bound function $f^*$.
(End of the Proof of Claim \ref{claim-c})

\medbreak

\begin{claim}\label{claim-d}
Suppose that there exists an interval $(f^*(c),\,c]$ which contains an infinitely descending sequence 
$\{ (g^*)^n(c) \}_{n \in \omega}$. 
Then the graph $G$ contains an infinite clique.
\end{claim}
By the property of $f^*$ and $g^*$, every point $u \in \, ((g^*)^n(c),\,c]$ is adjacent to all points $v \in \, (f^*(c),(g^*)^{n+1}(c))$. 
Using this observation, we can easily find an infinite clique. 
(End of the Proof of Claim \ref{claim-d})
\medbreak
Owing to Claim \ref{claim-d}, henceforth we assume that no interval of the form $(f^*(c),\,c]$ contains an infinitely descending sequence by $g^*$. 
Moreover, by saturation, there must exist a number $N \in \omega$ such that, for all $c \in (d_0,\,e_0)$, 
\[\tag{$**$}
(g^*)^N(c) \leq f^*(c).
\]
Our goal is to derive a contradiction from this property.
Let us introduce an equivalence relation $\sim$  on $(d_0,\,e_0)$. 
For $a,b \in (d_0,\,e_0)$, we write $a \sim b$, if one of the following holds:
\begin{itemize}
   \item $a \leq b$ and $(f^*)^n(b) \leq a$ for some $n \in \omega$; 
   \item $b \leq a$ and  $(f^*)^n(a) \leq b$ for some $n \in \omega$. 
\end{itemize}
Let \( \bigsqcup_{i < \alpha} X_i \) denote the decomposition of \( (d_0,\, e_0) \) into equivalence classes.
Although each class $X_i$ does not need to be an interval, it is a convex subset of $(d_0,\,e_0)$.  
By the properties of $f^*$ (Claim \ref{claim-c}.2), it follows that there is no edge connecting distinct classes $X_i$ and $X_j$.
Thus, to derive a contradiction, it is sufficient to show the following claim. 

\begin{claim}\label{claim-e}
Let $X_i$ be an arbitrary $\sim$-class.
$X_i$ is colored by using at most $2N$ colors, where $N$ is a number with the property $(**)$. 
\end{claim}
Let \(c \in X_i\). We may assume that \(X_i\) admits a partition into intervals
\[
X_i = \bigcup_{n \in \mathbb{Z}} I_n,
\]
where \(I_n = \left[\, (f^{*})^{n+1}(c),\, (f^{*})^{n}(c) \,\right)\).
First, we show that each $I_n$ can be colored using at most $N$ colors. 
By $(**)$, for some $k < N$, $I_n$ is partitioned into $k+1$ subintervals: 
\[
I_n=\left[f^*(c^*), (g^*)^{k}(c^*)\right) \; \cup \bigcup_{0 \leq i<k}\left[(g^*)^{i+1}(c^*),(g^*)^i(c^*)\right)\, ,
\]
where $c^*=(f^*)^{n}(c)$. 
By Claim C.1, each subinterval above has no edges. 
So, $I_n$ is validly colored by using at most $N$ colors. 
Recall that for all $n \in \Z$, $I_n$, and $I_{n-k}$ with $k \geq 2$ there are no edges that directly connect them. 
So, $X_i$ is $2N$-colored.
(End of the Proof of Claim \ref{claim-e})
\end{proof}
\medbreak

Theorem~\ref{thm:ominimalcase} is optimal in a certain sense.  
Let \(M\) be a structure with a definable total order \(<\).  
Consider the following graph, known as a \emph{shift graph}, which is definable in \(M^{2}\). 
Put
\[
G = \{\pair{u_{1},u_{2}} \in M^{2} : u_{1} < u_{2}\},
\]
and let \(\varphi(u_{1},u_{2},v_{1},v_{2})\) be the formula \(u_{2} = v_{1} \lor u_{1} = v_{2}\).  
Then \(\varphi(u_{1},u_{2};v_{1},v_{2})\) defines an edge relation on \(G\).  
The graph \(G\) is triangle‑free, yet it has an infinite chromatic number \cite{EH1}; the proof is a standard application of Ramsey’s theorem.
For each \(k \ge 2\), one obtains analogous shift graphs definable in \(M^{k}\) using similar formulas.  
These graphs are again triangle‑free and have an infinite (indeed, at least countably infinite) chromatic number.  
The argument is essentially the same as in the case \(k = 2\).